\documentclass{article}
\usepackage[utf8]{inputenc}
\usepackage{amssymb}
\usepackage{dsfont}
\usepackage{textcomp}
\usepackage{setspace}
\usepackage[english]{babel}
\usepackage{graphicx}
\usepackage{authblk}
\graphicspath{{./images/}}
\usepackage{amsmath, amssymb, amsthm}
\usepackage{booktabs}
\usepackage{siunitx}
\usepackage{graphicx}
\usepackage{subcaption}
\usepackage{amsmath}
\usepackage{algorithm2e}
\usepackage[a4paper, total={15cm, 22cm}, left=3cm, right=3cm, top=2cm, bottom=2cm]{geometry}
\newtheorem{theorem}{\sc Theorem}[section]

\newtheorem{dft}[theorem]{\sc Definition}
\newtheorem{lem}[theorem]{\sc Lemma}
\newtheorem{prop}[theorem]{\sc Proposition}
\newtheorem{cor}[theorem]{\sc Corollary}
\newtheorem{rem}[theorem]{\sc Remark}
\newtheorem{ex}[theorem]{\sc Example}

\usepackage[
    colorlinks=true,    
    linkcolor=blue,     
    citecolor=green, 
    filecolor=Magenta,  
    urlcolor=black   
]{hyperref}
\usepackage[nottoc]{tocbibind}
\usepackage{cite}  

\begin{document}

\title{Proportion of Simple Subgroups in Finite Groups and Their Applications}
\author{João Victor Monteiros de Andrade  \thanks{Computer Science Department, University of Brasília; jotandrade98@gmail.com } \and Leonardo Santos da Cruz \thanks{Computer Science Department, University of Brasília; leonardo-7238@hotmail.com }}

\maketitle

\section{Abstract}

This work introduces and investigates the function \( \mathcal{V}(G) = \frac{\text{Simp}(G)}{|L(G)|} \), where \( \text{Simp}(G) \) denotes the number of simple subgroups and \( |L(G)| \) the total number of subgroups of a finite group \( G \). The function \( \mathcal{V}(G) \), defined on the interval \( [0,1] \), represents the proportion of simple subgroups relative to the total number of subgroups. It serves as a tool for analyzing structural patterns in finite groups, particularly in p-groups and other families.  \\  

\textbf{Keywords:} GAP; functions, simple groups.

\section{Introduction}

Functions defined from groups are a topic of ongoing development that has been yielding promising results. Some cases have already been explored in \cite{Marius}, \cite{TM}, and \cite{deandrade2025}. Many of these functions have properties that have been previously studied, such as multiplicativity. The functions considered in \cite{Marius}, \cite{TM}, and \cite{deandrade2025} possess this property, and it is repeatedly used in the verification and application of important results. A natural question that arises is whether, even in the absence of this property, it is still possible to obtain consistent results in terms of asymptotic behavior and group classification. To this end, we consider the function 

\begin{dft}
$\mathcal{V}(G) = \frac{Simp(G)}{|L(G)|}$, where $Simp(G)$ denotes the number of simple subgroups of G.
\end{dft}

For simplicity, we denote the quantity $Simp(G)$ by $s(G)$. Note that this function is, in general, not multiplicative, since for a large class of groups—for example, solvable groups—the following result holds.

\begin{prop}
Let $G$ be a finite solvable group that has the following decomposition $G = \otimes_{i=1}^{n} H_{i}$, where $gcd(H_{i},H_{j}) = 1$, for $i\neq j$. Then

\begin{equation}\label{eq. 1.1}
    \mathcal{V}(G) = \frac{\sum_{i=1}^{n}s(H_{i})}{\prod_{i=1}^{n}|L(H_{i})|}.
\end{equation}
\end{prop}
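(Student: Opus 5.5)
The plan is to reduce everything to one structural fact: when the factors have pairwise coprime orders, every subgroup of $G=\otimes_{i=1}^{n}H_i$ splits as a direct product of subgroups of the factors. Once that holds, the denominator of (\ref{eq. 1.1}) is a lattice count and the numerator counts simple subgroups factor by factor.

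\textbf{Step 1 (subgroups split).} Let $K\le G$ and let $\pi_i\colon G\to H_i$ be the coordinate projections. I will show
$$K=\pi_1(K)\times\cdots\times\pi_n(K).$$
The inclusion $\subseteq$ is clear. For $\supseteq$, take $k=(h_1,\dots,h_n)\in K$. The order of $h_i$ divides $|H_i|$, and these orders are pairwise coprime. By the Chinese Remainder Theorem there is an integer $m$ with $m\equiv 1 \pmod{o(h_i)}$ and $m\equiv 0 \pmod{o(h_j)}$ for all $j\neq i$. Then $k^m=(1,\dots,h_i,\dots,1)\in K$, so $\pi_i(K)$, embedded in the $i$-th coordinate, lies in $K$ for every $i$. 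Since these embedded copies commute and their product is $\pi_1(K)\times\cdots\times\pi_n(K)$, the reverse inclusion follows.

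Conversely, every tuple $(K_1,\dots,K_n)$ with $K_i\le H_i$ gives the subgroup $K_1\times\cdots\times K_n$, and distinct tuples give distinct subgroups. Hence $K\mapsto(\pi_i(K))_i$ is a bijection $L(G)\to\prod_i L(H_i)$, and therefore
$$|L(G)|=\prod_{i=1}^{n}|L(H_i)|.$$

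\textbf{Step 2 (simple subgroups sit in one factor).} Let $S\le G$ be simple. By Step 1, $S=\prod_i S_i$ with $S_i=\pi_i(S)\le H_i$. If two factors $S_i,S_j$ were nontrivial, the copy of $S_i$ in $S$ would be a proper nontrivial normal subgroup of $S$, which is impossible. So exactly one $S_i$ is nontrivial, taking the convention that the trivial group is not simple. That $S_i\cong S$ is then a simple subgroup of $H_i$.

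Conversely, a simple subgroup of $H_i$, embedded in the $i$-th coordinate, is a simple subgroup of $G$. These embedded copies are distinct for different $i$, since they live in different coordinates. Hence $s(G)=\sum_{i=1}^{n}s(H_i)$.

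\textbf{Role of solvability.} In the solvable setting the simple subgroups are exactly the subgroups of prime order, because subgroups of solvable groups are solvable. So Step 2 can equally be seen as saying that a subgroup of prime order $p$ lies in the unique $H_i$ with $p\mid |H_i|$. This is the reading that makes the non-multiplicativity remark concrete.

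Dividing the count from Step 2 by the count from Step 1 gives (\ref{eq. 1.1}). The only step requiring a genuine argument is the CRT splitting in Step 1; everything else is bookkeeping.
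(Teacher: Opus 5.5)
Your proof is correct, but it takes a different route from the paper in two places.

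\textbf{Denominator.} The paper simply cites Theorem 1.1 of Betz--Nash for $|L(G)|=\prod_i|L(H_i)|$. You reprove this: your Chinese Remainder Theorem argument shows that every $K\le G$ equals $\pi_1(K)\times\cdots\times\pi_n(K)$, and it handles all $n$ factors at once rather than by induction on the number of factors.

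\textbf{Numerator.} The paper uses solvability to reduce simple subgroups to subgroups of prime order. It then notes that a prime-order element of $H_1\times H_2$ has a trivial coordinate, and inducts from two factors to $n$. Your Step 2 instead argues that a nontrivial direct factor of a simple group would be a proper nontrivial normal subgroup, and this never uses solvability.

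The paper's route is shorter. Yours is self-contained and proves a strictly more general statement: the formula holds for any finite group that is a direct product of factors of pairwise coprime order, so the solvability hypothesis is superfluous. In particular, your argument directly covers the paper's later application to $C_{p^n}\times A_5$. That group is not solvable, and the paper only alludes to ``necessary adaptations'' there; under your proof none are needed.
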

    
\begin{proof}
The denominator expression is given by Theorem 1.1 in \cite{Betz16032022}. Thus, it suffices to define the numerator. If $G$ is solvable, the only simple groups that $G$ admits are cyclic subgroups of prime order. Thus, $s(H_1\times H_2)=s(H_1)+s(H_2),$ since a prime-order element in \(H_1\times H_2\) has the form
\((g,e)\) or \((e,h)\). By induction on the index, the result follows.   
\end{proof}

\begin{ex}
\begin{align*}
     \mathcal{V}(C_3 \times C_5) &= \frac{1+1}{2\cdot2} =\frac{1}{2};\\   \mathcal{V}(C_3) \cdot  \mathcal{V}(C_5) &= \frac{1}{2} \cdot \frac{1}{2} = \frac{1}{4}. 
\end{align*}    
\end{ex}

It is worth noting that the expression \ref{eq. 1.1} can, with the necessary adaptations, be used in the analysis of other families of groups that are not necessarily solvable.

\begin{prop}\label{prop. 2.4}
Let $G = C_{p^n} \times A_{5}$, where $p$ is prime and $p > 5$, then $$\mathcal{V}(G) = \frac{33}{59(n+1)}.$$ 
\end{prop}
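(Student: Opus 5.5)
The plan is to count the subgroups of $G = C_{p^n}\times A_5$ and its simple subgroups separately, then take the quotient. Throughout, the key fact is that $|C_{p^n}| = p^n$ and $|A_5| = 60$ are coprime, because $p>5$.

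\textbf{Step 1 (denominator).} Since the orders are coprime, every subgroup $K\le G$ splits as $K = (K\cap C_{p^n})\times (K\cap A_5)$. Indeed, the projections of $K$ onto the two factors have coprime orders, so $K$ contains its Sylow-type components: $K$ contains an element mapping to any given element of $\pi_1(K)$, and raising it to a suitable power coprime to $p$ kills the $A_5$-coordinate. Similarly for $\pi_2(K)$. This is the same mechanism as in Theorem 1.1 of \cite{Betz16032022}, which is used in the proof of Proposition~1.2. Hence
\[
|L(G)| = |L(C_{p^n})|\cdot|L(A_5)| = (n+1)\cdot 59.
\]
Here I use that the cyclic group $C_{p^n}$ has exactly $n+1$ subgroups. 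I also use the standard count of $59$ subgroups of $A_5$:
\begin{itemize}
\item $1$ trivial subgroup;
\item $15$ subgroups of order $2$;
\item $10$ of order $3$;
\item $6$ of order $5$;
\item $5$ Klein four-groups;
\item $10$ copies of $S_3$;
\item $6$ copies of $D_{10}$;
\item $5$ copies of $A_4$;
\item $A_5$ itself.
\end{itemize}

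\textbf{Step 2 (numerator).} By Step 1, a simple subgroup has the form $H\times K$ with $H\le C_{p^n}$ and $K\le A_5$. If both $H$ and $K$ are nontrivial, then $H\times 1$ is a proper nontrivial normal subgroup, so $H\times K$ is not simple. Therefore
\[
s(G) = s(C_{p^n}) + s(A_5),
\]
which is the adaptation of \eqref{eq. 1.1} mentioned before the statement. We have $s(C_{p^n}) = 1$, since $C_p$ is the only simple subgroup. For $A_5$, the simple subgroups are the prime-order cyclic ones, numbering $15+10+6 = 31$, together with $A_5$ itself. The remaining subgroups $V_4$, $S_3$, $D_{10}$ and $A_4$ are not simple. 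Thus $s(A_5) = 32$ and $s(G) = 33$, which gives $\mathcal{V}(G) = \frac{33}{59(n+1)}$.

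\textbf{Main obstacle.} The main obstacle is only bookkeeping: justifying the complete subgroup list of $A_5$, and in particular the total of $59$. I would verify this by conjugacy classes of subgroups, or cite a GAP computation as the paper does elsewhere. I would also state explicitly the convention that the trivial group is not counted as simple.
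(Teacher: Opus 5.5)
Your proposal is correct and follows the paper's route: since $\gcd(p^n,60)=1$, you get $|L(G)| = |L(C_{p^n})|\cdot|L(A_5)| = 59(n+1)$ and $s(G) = s(C_{p^n}) + s(A_5) = 1+32 = 33$. You justify the additivity of $s$ more carefully than the paper, whose formula \eqref{eq. 1.1} was proved only for solvable groups, by noting that $H\times K$ with both factors nontrivial has $H\times 1$ as a proper nontrivial normal subgroup; note also that $s(C_{p^n})=1$ tacitly requires $n\ge 1$.
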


\begin{proof}
Note that for $p>5$, $gcd(|C_{p^n}|, |A_5|) =1$, then using \ref{eq. 1.1} we have $s(C_{p^n}) = 1$ and $s(A_5) = 32$. For the denominator we have $|L(C_{p^n})| = (n+1)$ and $|L(A_5)| = 59$.

\end{proof}

It is still possible to obtain consistent results for this type of function even if it is not multiplicative, as can be seen in the following propositions for subfamilies of solvable groups.

\begin{prop}
Let $G$ be a cyclic group of order $n = \prod_{i=1}^{m} p_i^{a_i}$, 
$p_1, p_2, \dots, p_m$ distinct primes and $a_1, a_2, \dots, a_m \ge 1$, then

\begin{equation}
 \mathcal{V}(G) = \frac{\omega(n)}{\tau(n)} = \frac{m}{\prod_{i=1}^{m} (a_i + 1)},  
\end{equation}

\noindent{where $\omega$ indicates the number of distinct prime factors of n and $\tau$ indicates the number of divisors of n.}  
\end{prop}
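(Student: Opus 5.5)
The plan is to compute the numerator and the denominator of $\mathcal{V}(G)$ separately, using the classical structure of the subgroup lattice of a finite cyclic group. Let $G=\langle g\rangle$ have order $n=\prod_{i=1}^{m}p_i^{a_i}$. The key fact is that for every divisor $d\mid n$ there is exactly one subgroup of order $d$, namely $\langle g^{n/d}\rangle$, and every subgroup is of this form. Hence $|L(G)|=\tau(n)$. By the standard multiplicative formula for the divisor function, $\tau(n)=\prod_{i=1}^{m}(a_i+1)$.

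For the numerator, I would argue as in the proof of the first proposition. $G$ is abelian, hence solvable, so every subgroup is abelian. An abelian group is simple exactly when it is cyclic of prime order: any nontrivial abelian group with no nontrivial proper subgroups must be $C_p$, since otherwise a nonidentity element generates a proper subgroup or has composite order. The simple subgroups of $G$ are therefore exactly its subgroups of prime order. By the uniqueness statement above, there is exactly one such subgroup for each prime $p_i$ dividing $n$, and none for other primes by Lagrange. Thus $s(G)=m=\omega(n)$. Here I take the convention, consistent with the examples, that the trivial group is not simple.

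Dividing the two counts gives
\[
\mathcal{V}(G)=\frac{\omega(n)}{\tau(n)}=\frac{m}{\prod_{i=1}^{m}(a_i+1)}.
\]

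As an alternative route for the denominator, I could write $G\cong\prod_{i}C_{p_i^{a_i}}$ with pairwise coprime orders and invoke the first proposition directly. This uses $s(C_{p^a})=1$ and $|L(C_{p^a})|=a+1$, the latter because $C_{p^a}$ has a chain of subgroups indexed by $0,\dots,a$.

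There is no real obstacle here. The only point needing care is the convention excluding the trivial subgroup from the simple ones, together with the justification that simple abelian groups are precisely the cyclic groups of prime order.
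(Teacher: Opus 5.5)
Your proof is correct and follows the paper's argument. The uniqueness of the subgroup of each order $d \mid n$ gives $|L(G)|=\tau(n)=\prod_{i=1}^{m}(a_i+1)$, and the simple subgroups are exactly the $\omega(n)=m$ subgroups of prime order; you also fill in two details the paper leaves implicit, namely why simple abelian groups are cyclic of prime order and that the trivial subgroup is not counted as simple.
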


\begin{proof}
Every subgroup of a cyclic group $C_n$ is cyclic, and for each divisor $d$ of $n$, there exists exactly one subgroup of order $d$.
The simple subgroups of $C_n$ are exactly the subgroups of order p with p prime and p divides n. Thus, the number of simple subgroups of $C_n$ coincides with the number of distinct primes that divide $n$. The total number of subgroups is given by $\tau$. Performing the division gives the result.   
\end{proof}

\begin{cor}
Let $G = H \times A_{5}$, where $H$ is a cyclic group with order $n = \prod_{i=1}^{m} p_i^{a_i}$, 
$p_1, p_2, \dots, p_m$ distinct primes greater than 5 and $a_1, a_2, \dots, a_m \ge 1$, then

\begin{equation*}
  \mathcal{V}(G) = \frac{32+ \omega(n)}{59\cdot\tau(n)}    
\end{equation*}

\end{cor}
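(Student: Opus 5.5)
The plan is to mirror the proof of Proposition \ref{prop. 2.4}, replacing $C_{p^n}$ by the cyclic group $H$ of order $n$, and to compute the numerator and the denominator of $\mathcal{V}(G)$ separately.

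For the denominator, all primes dividing $|H|=n$ exceed $5$, so $\gcd(|H|,|A_5|)=\gcd(n,60)=1$. Since the orders are coprime, every subgroup $K\le H\times A_5$ splits as $K=(K\cap H)\times(K\cap A_5)$. Indeed, $K$ contains the elements of $K$ whose order divides $n$ and those whose order divides $60$. Each element $(h,a)\in K$ has a power equal to $(h,e)$ and another power equal to $(e,a)$, which is the standard Goursat/coprime argument. Hence $|L(G)|=|L(H)|\cdot|L(A_5)|=\tau(n)\cdot 59$. Here we use the preceding proposition for cyclic groups, which gives $|L(H)|=\tau(n)$, and the known count $|L(A_5)|=59$. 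This is exactly what Theorem 1.1 of \cite{Betz16032022} provides, as cited for \eqref{eq. 1.1}.

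For the numerator, the same splitting tells us which subgroups are simple. A subgroup $K_1\times K_2$ with both factors nontrivial has the proper nontrivial normal subgroup $K_1\times 1$, so it is not simple. Therefore the simple subgroups of $G$ are exactly the simple subgroups of $H$ (embedded as $K_1\times 1$) together with those of $A_5$ (embedded as $1\times K_2$). This gives $s(G)=s(H)+s(A_5)$. By the cyclic-group proposition, $s(H)=\omega(n)$. As in Proposition \ref{prop. 2.4}, $s(A_5)=32$: it has $6$ subgroups of order $5$, $10$ of order $3$, $15$ of order $2$, and $A_5$ itself. Dividing yields $\mathcal{V}(G)=\frac{32+\omega(n)}{59\,\tau(n)}$.

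The only step needing care is the claim that no non-direct-product subgroups arise. This is where the hypothesis that all $p_i>5$ enters, since it guarantees coprimality. The argument via powers of elements under coprime orders settles it: if $(h,a)\in K$ with $|h|$ dividing $n$ and $|a|$ dividing $60$, choose $k\equiv 1 \pmod{|h|}$ and $k\equiv 0 \pmod{|a|}$ by the Chinese Remainder Theorem. Then $(h,a)^k=(h,e)\in K$.
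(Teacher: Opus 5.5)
Your proof is correct and follows essentially the paper's route. As in Proposition~\ref{prop. 2.4}, the corollary comes from the coprime splitting $s(G)=s(H)+s(A_5)$, $|L(G)|=|L(H)|\,|L(A_5)|$ of \eqref{eq. 1.1}, together with $s(H)=\omega(n)$, $|L(H)|=\tau(n)$ from the cyclic-group proposition and $s(A_5)=32$, $|L(A_5)|=59$. Your Chinese Remainder Theorem splitting of subgroups, and your observation that $K_1\times K_2$ with both factors nontrivial is never simple, are precisely the ``necessary adaptations'' of \eqref{eq. 1.1} to the non-solvable factor $A_5$ that the paper leaves implicit, since its proof of \eqref{eq. 1.1} only handles prime-order simple subgroups.
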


\begin{prop}
  Let $C_{p}^n = C_p \times ...\times C_p$, then

\begin{equation}
 \mathcal{V}(C_{p}^{n}) = \frac{ \binom{n}{1}_p}{
\sum_{k=0}^{n}\binom{n}{k}_p}, 
\end{equation}

\noindent{where $\binom{n}{k}_p = \prod_{i=1}^{k} \frac{p^n - p^{i-1}}{p^k - p^{i-1}}$ and $\binom{n}{1}_p =\frac{p^n-1}{p-1}.$}
\end{prop}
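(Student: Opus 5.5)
We view $C_p^n$ as the $n$-dimensional vector space $V=\mathbb{F}_p^n$ over the field with $p$ elements, and compute numerator and denominator of $\mathcal{V}(C_p^n)=s(C_p^n)/|L(C_p^n)|$ separately. The key observation is that the subgroups of $C_p^n$ are exactly the $\mathbb{F}_p$-subspaces of $V$. Indeed, any subgroup is closed under addition, hence under multiplication by the scalars $0,1,\dots,p-1$, which are integer multiples. So $|L(C_p^n)|$ is the number of subspaces of $V$, and it splits by dimension as $\sum_{k=0}^{n} N_k$, where $N_k$ is the number of $k$-dimensional subspaces.

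For the numerator, $C_p^n$ is abelian, hence solvable. As used in the proof of the first proposition, its simple subgroups are exactly its subgroups of prime order, and every nontrivial element has order $p$. So $s(C_p^n)=N_1$, the number of one-dimensional subspaces. Every nonzero vector spans one such line, and each line contains exactly $p-1$ nonzero vectors. Hence $N_1=\frac{p^n-1}{p-1}$, which agrees with the formula for $\binom{n}{1}_p$.

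It remains to show $N_k=\binom{n}{k}_p$ with the stated product formula. We count ordered $k$-tuples $(v_1,\dots,v_k)$ of linearly independent vectors in $V$. The vector $v_i$ must avoid the span of $v_1,\dots,v_{i-1}$, which has $p^{i-1}$ elements, so there are $\prod_{i=1}^{k}(p^n-p^{i-1})$ such tuples. Each $k$-dimensional subspace $W$ arises from exactly as many tuples as there are ordered bases of $W\cong\mathbb{F}_p^k$. By the same argument that number is $\prod_{i=1}^{k}(p^k-p^{i-1})$. Dividing gives $N_k=\prod_{i=1}^{k}\frac{p^n-p^{i-1}}{p^k-p^{i-1}}$. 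For $k=0$ the product is empty and equals $1$, matching the trivial subgroup. Summing over $k$ and dividing $N_1$ by the total yields the formula.

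The main point needing care is the identification of subgroups with subspaces. This is what justifies using linear-algebra counting for $|L(C_p^n)|$. Once it is in place, the double-counting of ordered bases is routine.
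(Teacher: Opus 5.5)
Your proposal is correct and follows the paper's own route: identify $C_p^n$ with $\mathbb{F}_p^n$, so that subgroups are subspaces counted by Gaussian binomials, and observe that the simple subgroups are exactly the $\frac{p^n-1}{p-1}$ one-dimensional subspaces. The only difference is that you prove the subgroup--subspace correspondence and derive the Gaussian binomial count by double-counting ordered bases, whereas the paper simply states both facts.
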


\begin{proof}
The group $C_p^{n}=C_p\times\cdots\times C_p$ (with $p$ - prime) is isomorphic to the vector space $\mathbb{F}(p^n)$; thus, the subgroups correspond exactly to the vector subspaces. The number of subgroups of dimension $k$ (i.e., of order $p^k$) is given by the Gaussian binomial coefficient (or $q$-binomial) with $q = p$:

$$
\binom{n}{k}_p = \frac{\prod_{i=0}^{k-1} (p^n - p^i)}{\prod_{i=0}^{k-1} (p^k - p^i)}.
$$

The total number of subgroups of $C_p^{n}$ is the sum of these terms for $(k=0,\dots,n):$ $\sum_{k=0}^{n}\binom{n}{k}_{p}.$ Now, let $G=C_p^n\cong\mathbb{F}_p^n$. A subgroup of $G$ is simple (as an abelian group) if and only if it is cyclic of prime order; therefore, the simple subgroups of $G$ are exactly the subgroups of order $p$, i.e., the 1-dimensional subspaces of $\mathbb{F}_p^n$. We count these subspaces as follows: there are $p^n-1$ non-zero vectors in $\mathbb{F}_p^n$, and each 1-dimensional subspace contains exactly $p-1$ non-zero vectors, so the number of such subspaces is

$$
\frac{p^n-1}{p-1}=1+p+\cdots+p^{,n-1} = \binom{n}{1}_p.
$$
\end{proof}

\begin{cor}
    $\lim_{n \longrightarrow \infty } \mathcal{V}(C_{p}^{n}) = 0.$
\end{cor}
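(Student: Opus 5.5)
By the preceding proposition, $\mathcal{V}(C_p^n)=\binom{n}{1}_p\big/\sum_{k=0}^{n}\binom{n}{k}_p$. The plan is to bound this ratio above by a single term of the sum that grows much faster than the numerator. Since every Gaussian binomial coefficient is a positive integer, the denominator is at least $\binom{n}{k}_p$ for any fixed $k$. I will take $k=2$ (for $n\ge 2$), giving
\[
\mathcal{V}(C_p^n)\le \frac{\binom{n}{1}_p}{\binom{n}{2}_p}.
\]

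Next, compute this quotient from the product formula stated in the proposition:
\[
\binom{n}{2}_p=\frac{(p^n-1)(p^n-p)}{(p^2-1)(p^2-p)}, \qquad \binom{n}{1}_p=\frac{p^n-1}{p-1}.
\]
After cancelling $p^n-1$, the quotient becomes
\[
\frac{(p^2-1)(p^2-p)}{(p-1)(p^n-p)}=\frac{p(p^2-1)}{p^n-p}=\frac{p^2-1}{p^{n-1}-1}.
\]
For fixed $p$, this tends to $0$ as $n\to\infty$, because $p^{n-1}\to\infty$. Since $\mathcal{V}(C_p^n)\ge 0$, the squeeze theorem gives $\lim_{n\to\infty}\mathcal{V}(C_p^n)=0$.

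The only point needing care is the algebraic simplification of $\binom{n}{1}_p/\binom{n}{2}_p$, which is a routine cancellation. The estimate is also fairly crude: if a rate is wanted, the middle term $\binom{n}{\lfloor n/2\rfloor}_p\approx p^{n^2/4}$ shows the decay is in fact superexponential. For the limit itself, however, the $k=2$ bound is enough. The limit is taken with $p$ fixed, although the bound is uniform in $p$ only in the weak sense that it tends to $0$ for every $p$.
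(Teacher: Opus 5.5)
Your argument is correct and is exactly the verification the paper leaves implicit: the paper states this corollary without proof as an immediate consequence of the Gaussian-binomial formula, and your comparison with the single term $\binom{n}{2}_p$, which gives the bound $\frac{p^2-1}{p^{n-1}-1}$, supplies the missing detail. One small correction to your closing aside: for $n\ge 4$ this bound equals $\frac{p+1}{1+p+\cdots+p^{n-2}}$, which is decreasing in $p$, so it is at most $\frac{3}{2^{n-1}-1}$ and the convergence is in fact uniform in $p$.
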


\begin{cor}
   Let $C_{p}^2$, then $ \mathcal{V}(C_{p}^{2}) = \frac{p+1}{p+3}.$
\end{cor}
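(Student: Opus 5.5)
We specialize the preceding proposition on $\mathcal{V}(C_p^n)$ to $n=2$, so it suffices to evaluate the Gaussian binomial coefficients $\binom{2}{k}_p$ for $k=0,1,2$ and form the quotient.

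First, the numerator. By the formula $\binom{n}{1}_p=\frac{p^n-1}{p-1}$, we get $\binom{2}{1}_p=\frac{p^2-1}{p-1}=p+1$. This agrees with the direct count: the $p^2-1$ nonzero vectors of $\mathbb{F}_p^2$ are partitioned into lines, each containing $p-1$ nonzero vectors, giving $p+1$ subgroups of order $p$. These are exactly the simple subgroups of $C_p^2$.

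Next, the denominator. The empty-product convention gives $\binom{2}{0}_p=1$, corresponding to the trivial subgroup. For $k=2$ the defining product is
\[
\binom{2}{2}_p=\frac{(p^2-1)(p^2-p)}{(p^2-1)(p^2-p)}=1,
\]
corresponding to the whole group. Hence $|L(C_p^2)|=1+(p+1)+1=p+3$.

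Dividing yields $\mathcal{V}(C_p^2)=\frac{p+1}{p+3}$. The one point needing care is that the trivial subgroup is counted in $|L(G)|$ but not in $\mathrm{Simp}(G)$, since simple groups are nontrivial by convention. The proof of the preceding proposition already makes this identification, so the corollary requires only this arithmetic and I expect no real obstacle.
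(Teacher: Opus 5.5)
Your proposal is correct and matches the paper's approach: the corollary is just the case $n=2$ of the proposition on $\mathcal{V}(C_p^n)$. The values $\binom{2}{0}_p=\binom{2}{2}_p=1$ and $\binom{2}{1}_p=p+1$ give $|L(C_p^2)|=p+3$ with $p+1$ simple subgroups, and your remark that the trivial subgroup is excluded from $\mathrm{Simp}(G)$ agrees with the paper's convention (compare $\omega(n)/\tau(n)$ for cyclic groups).
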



\begin{cor}
    $\lim_{p \longrightarrow \infty } \mathcal{V}(C_{p}^{2}) = 1.$
\end{cor}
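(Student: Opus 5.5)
The statement is $\lim_{p\to\infty}\mathcal{V}(C_p^2)=1$. I will deduce it directly from the preceding corollary, which gives the closed form $\mathcal{V}(C_p^{2})=\frac{p+1}{p+3}$. That formula itself comes from the proposition on $C_p^n$ with $n=2$. There $\binom{2}{1}_p=p+1$ counts the simple subgroups, and the denominator is $\binom{2}{0}_p+\binom{2}{1}_p+\binom{2}{2}_p=1+(p+1)+1=p+3$.

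The limit is then elementary. I would write
\[
\mathcal{V}(C_p^{2})=\frac{p+1}{p+3}=1-\frac{2}{p+3}.
\]
Since $p$ ranges over the primes, an unbounded set, we have $\frac{2}{p+3}\to 0$ as $p\to\infty$, and so the quotient tends to $1$. Equivalently, one can divide numerator and denominator by $p$ to get $\frac{1+1/p}{1+3/p}\to 1$.

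There is essentially no obstacle here. The only point worth making explicit is that the limit is taken along the primes. Because there are infinitely many primes, any sequence of primes tending to infinity makes $\frac{2}{p+3}\to0$, so the limit is well defined and equals $1$.

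I would also record the interpretation. All but exactly two subgroups of $C_p^2$ are simple, namely all except the trivial subgroup and $C_p^2$ itself. Meanwhile the number of simple subgroups, $p+1$, grows without bound. This is why the proportion approaches $1$, in contrast with the earlier corollary, where $\mathcal{V}(C_p^n)\to 0$ as $n\to\infty$.
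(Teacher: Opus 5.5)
Your proof is correct and follows the paper's route. The paper gives no separate argument: the corollary follows immediately from the preceding closed form $\mathcal{V}(C_p^{2})=\frac{p+1}{p+3}$, which comes from the $C_p^n$ proposition with $n=2$, by letting $p\to\infty$ along the primes, just as you do.
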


\begin{cor}
Let $G$ be a non-cyclic abelian group. Then for some p and $n \geq 5$
\begin{equation*}
     \mathcal{V}(C_{p}^{n}) \leq  \mathcal{V}(G) \leq  \mathcal{V}(C_{p}^{2}).
\end{equation*}
\end{cor}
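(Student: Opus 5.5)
The statement is existential in $p$ and $n$, so I read it as: for a fixed non-cyclic abelian group $G$ there exist a prime $p$ and an integer $n\ge 5$ (depending on $G$) such that both inequalities hold. The plan is to prove the two inequalities separately. I only need that $\mathcal{V}(G)$ lies strictly between $0$ and $1$, quantitatively, and then compare with the two limit corollaries above: $\mathcal{V}(C_p^n)\to 0$ as $n\to\infty$, and $\mathcal{V}(C_p^2)=\frac{p+1}{p+3}\to 1$ as $p\to\infty$.

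For the upper bound, I first record that, since $G$ is abelian (hence solvable), its simple subgroups are exactly its subgroups of prime order, as in the proof of the first proposition. So $s(G)\ge 1$ is finite. The subgroups $\{e\}$ and $G$ are not simple: $\{e\}$ is trivial, and $G$, being non-cyclic, is not of prime order. Hence $|L(G)|\ge s(G)+2$, and since $x\mapsto x/(x+2)$ is increasing,
\[
\mathcal{V}(G)\le \frac{s(G)}{s(G)+2}=1-\frac{2}{s(G)+2}.
\]
On the other hand $\mathcal{V}(C_p^2)=1-\frac{2}{p+3}$ by the corollary above. So it suffices to pick any prime $p\ge s(G)-1$, which exists because there are infinitely many primes. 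This gives $\mathcal{V}(G)\le\mathcal{V}(C_p^2)$.

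For the lower bound, $\mathcal{V}(G)\ge 1/|L(G)|>0$, since $s(G)\ge1$. Using the proposition for $C_p^n$, and keeping only the $k=1$ and $k=2$ terms of the denominator, I get
\[
\mathcal{V}(C_p^n)\le \frac{\binom{n}{1}_p}{\binom{n}{1}_p+\binom{n}{2}_p}.
\]
This tends to $0$ because $\binom{n}{2}_p/\binom{n}{1}_p=\frac{p^{n-1}-1}{p^2-1}\to\infty$; this is also just the corollary $\lim_{n\to\infty}\mathcal{V}(C_p^n)=0$. Hence, with the same prime $p$ as above, I can choose $n\ge 5$ large enough that $\mathcal{V}(C_p^n)\le 1/|L(G)|\le\mathcal{V}(G)$. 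Explicitly, $p^{n-1}-1\ge (p^2-1)|L(G)|$ suffices.

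The only genuine subtlety, and the step I expect to need the most care, is the interpretation of the quantifiers together with the strict inequality $\mathcal{V}(G)<1$. Non-cyclicity is exactly what guarantees that $G$ itself is not simple, so that $|L(G)|\ge s(G)+2$. Without it, $G=C_q$ would give $\mathcal{V}(G)=1/2$ and the argument would still go through, but the statement is phrased for non-cyclic $G$. Once the quantitative bounds $1/|L(G)|\le \mathcal{V}(G)\le s(G)/(s(G)+2)$ are in place, the rest is a direct appeal to the earlier corollaries.
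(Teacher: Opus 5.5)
Your argument is correct and is essentially what the paper intends. The corollary is stated there without proof, immediately after $\lim_{n\to\infty}\mathcal{V}(C_p^n)=0$ and $\lim_{p\to\infty}\mathcal{V}(C_p^2)=1$, and you make that squeeze rigorous. You use $1/|L(G)|\le\mathcal{V}(G)\le s(G)/(s(G)+2)$, choose first a prime $p\ge s(G)-1$, and then choose $n\ge 5$ with $p^{n-1}-1\ge(p^2-1)|L(G)|$.

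Your per-$G$ existential reading of the quantifiers is the right one: for any fixed $p$ the upper bound already fails for $G=C_q^2$ with a prime $q>p$.
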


\subsection{Families of finite $p$-groups}

\begin{prop}
Consider    
\begin{itemize}
    \item{$D_{2^n} = \bigl\langle\,x,y \bigm| x^{2^{n-1}} = 1,\; y^2 = 1,\; y\,x\,y = x^{-1} \bigr\rangle$;} 
    \item{$Q_{2^n}=\langle x,y \mid x^{2^{n-1}}=1,\; y^2=x^{2^{n-2}},\; yxy^{-1}=x^{-1}\rangle, \, (n\ge 3);$} 
    \item{$QD_{2^n} = \langle x, y \mid x^{2^{n-1}} = 1,\; y^2 = 1,\; yxy = x^{r} \rangle, \, (n \geq 4)$} 
\end{itemize}

then

\begin{enumerate}
    \item{$ \mathcal{V}(D_{2^n}) = \frac{2^{n-1}+1}{2^n +n-1}$;} 
    \item{$ \mathcal{V}(Q_{2^{n}}) = \frac{1}{2^{n-1}+n-1}$;} 
    \item{$ \mathcal{V}(QD_{2^n}) = \frac{2^{n-2}+1}{3 \cdot2^{n-2} +n-1}$.} 
\end{enumerate}

\end{prop}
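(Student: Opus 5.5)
The plan is to reduce both the numerator and the denominator to explicit counts inside the three presentations.

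\textbf{Numerator.} All three groups are $2$-groups, hence solvable. As in the proof of the first proposition, the simple subgroups are then exactly the subgroups of prime order, here order $2$. So $s(G)$ equals the number of involutions of $G$. I would count these from the normal form: every element is $x^i$ or $x^i y$ with $0\le i<2^{n-1}$.

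In all three groups, $\langle x\rangle$ contains exactly one involution, $x^{2^{n-2}}$. The elements outside $\langle x\rangle$ behave as follows.
\begin{itemize}
\item In $D_{2^n}$ we have $(x^i y)^2=1$ for every $i$, giving $2^{n-1}$ further involutions. Hence $s(D_{2^n})=2^{n-1}+1$.
\item In $Q_{2^n}$ we have $(x^i y)^2=y^2=x^{2^{n-2}}\neq 1$. Hence $s(Q_{2^n})=1$.
\item In $QD_{2^n}$, with $r=2^{n-2}-1$, we have $(x^i y)^2=x^{i(1+r)}=x^{i2^{n-2}}$. This is trivial exactly when $i$ is even, giving $2^{n-2}$ further involutions. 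Hence $s(QD_{2^n})=2^{n-2}+1$.
\end{itemize}

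\textbf{Denominator.} I would classify subgroups $H\le G$ by their intersection with the cyclic maximal subgroup $\langle x\rangle$.

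If $H\le\langle x\rangle$, then $H$ is one of the $n$ subgroups of the cyclic group $C_{2^{n-1}}$.

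Otherwise $H\cap\langle x\rangle$ has index $2$ in $H$. Write $H\cap\langle x\rangle=\langle x^d\rangle$ with $d\mid 2^{n-1}$, and pick $x^i y\in H$. Then $H=\langle x^d, x^i y\rangle$, and $H$ determines, and is determined by, the coset $i+d\mathbb{Z}$ modulo $d$. So for each admissible $d$ there are at most $d$ such subgroups. The only constraint is that $(x^iy)^2\in\langle x^d\rangle$, since otherwise the intersection with $\langle x\rangle$ would be larger.
\begin{itemize}
\item For $D_{2^n}$ every $d\mid 2^{n-1}$ is admissible and gives exactly $d$ subgroups. The total is $n+\sum_{j=0}^{n-1}2^j=2^n+n-1$.
\item For $Q_{2^n}$, $(x^iy)^2=x^{2^{n-2}}$ forces $d\mid 2^{n-2}$, and then all $d$ cosets work. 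The total is $n+(2^{n-1}-1)=2^{n-1}+n-1$.
\item For $QD_{2^n}$, when $d\mid 2^{n-2}$ all $d$ cosets work, giving $2^{n-1}-1$ subgroups. When $d=2^{n-1}$, i.e. $H$ is of order $2$ generated by some $x^iy$, we need $i$ even, giving $2^{n-2}$ subgroups. The total is $n+2^{n-1}-1+2^{n-2}=3\cdot 2^{n-2}+n-1$.
\end{itemize}
Dividing numerator by denominator then yields items 1--3.

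\textbf{Main obstacle.} The delicate step is the bijection in the second case. I must check that each pair $(d,\, i \bmod d)$ with $(x^iy)^2\in\langle x^d\rangle$ gives a genuine subgroup whose intersection with $\langle x\rangle$ is exactly $\langle x^d\rangle$. Since $x^d$ generates a normal subgroup of $G$, $x^iy$ normalizes $\langle x^d\rangle$. So $\langle x^d\rangle\cup x^iy\langle x^d\rangle$ is closed under multiplication precisely when $(x^iy)^2\in\langle x^d\rangle$. I would also check that distinct cosets give distinct subgroups, which holds because $H\setminus\langle x\rangle=x^iy\langle x^d\rangle$ recovers the coset.

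For $QD_{2^n}$ the computation of $(x^iy)^2$ depends on the value of $r$, which the statement leaves unspecified. I would fix $r=2^{n-2}-1$, the standard semidihedral relation.
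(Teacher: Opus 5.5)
Your proof is correct. The numerator is handled exactly as in the paper: you count involutions via the square of $x^iy$, and get $2^{n-1}+1$, $1$ and $2^{n-2}+1$.

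The difference is in the denominator. The paper does not derive $|L(G)|$ at all; it quotes the known subgroup counts $2^n+n-1$, $2^{n-1}+n-1$ and $3\cdot 2^{n-2}+n-1$ from \cite{Marius}. You prove these counts. For $H\not\le\langle x\rangle$ you use the pair $(d,\ i \bmod d)$, where $H\cap\langle x\rangle=\langle x^d\rangle$, subject to the admissibility condition $(x^iy)^2\in\langle x^d\rangle$. Your closure argument is the right justification: $\langle x^d\rangle$ is characteristic in the normal cyclic subgroup $\langle x\rangle$, hence normal in $G$. Your sums check out; for instance they give $5$, $6$ and $15$ subgroups for $D_4$, $Q_8$ and $QD_{16}$.

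The citation keeps the paper's proof short. Your route makes the result self-contained and verifiable from the presentations alone. It also shows that numerator and denominator come from one mechanism: the involutions outside $\langle x\rangle$ are exactly the $d=2^{n-1}$ term of your classification, which contributes $2^{n-1}$, $0$ and $2^{n-2}$ respectively.

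One small point: you, like the paper, tacitly need $n\ge 2$ for $D_{2^n}$. For $n=1$ the group is $C_2$, so $\mathcal{V}=1/2$, while the stated formula gives $1$. The element $x^{2^{n-2}}$ in your argument is also undefined in that case.
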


\begin{proof}

In these cases, since the groups are p-groups (in particular 2-groups), the simple subgroups will be cyclic of prime order. Since the only prime to be considered is 2, it is sufficient to count the number of involutions in each group. 

\begin{enumerate}
    \item Consider the dihedral group $D_{2^n} = \langle x,y \mid x^{2^{n-1}}=1,\; y^2=1,\; yxy=x^{-1}\rangle$. Its elements are the rotations \(x^m\) and the reflections \(y x^k\), with \(0\le k<2^{n-1}\). The rotations \(x^m\) have a divisor order of \(2^{n-1}\). The only rotation of order \(2\) is \(x^{2^{n-2}}\), since $(x^{2^{n-2}})^2 = x^{2^{n-1}} = 1.$ On the other hand, for any \(k\), the element \(y x^k\) is a reflection and satisfies $(y x^k)^2 = y x^k y x^k = 1$, therefore all reflections are involutions. Since there are \(2^{n-1}\) reflections, it follows that the total number of involutions in \(D_{2^n}\) is $2^{n-1} + 1.$ It is concluded, therefore, that the number of subgroups of order \(2\) in \(D_{2^n}\) is $2^{n-1} + 1.$
    \item Consider the generalized quaternary group $Q_{2^n}=\langle x,y \mid x^{2^{n-1}}=1,\; y^2=x^{2^{n-2}},\; yxy^{-1}=x^{-1}\rangle,$ with \(n\ge 3\). The element \(x^{2^{n-2}}\) is central and satisfies $(x^{2^{n-2}})^2 = x^{2^{n-1}} = 1,$ and is therefore an involution. If \(x^k\) is a power of \(x\) distinct from \(x^{2^{n-2}}\), then its order is greater than \(2\). Furthermore, for any element of the form \(x^k y\), we have $(x^k y)^2 = x^k y x^k y = x^k (y x^k y^{-1}) y^2 = x^k x^{-k} x^{2^{n-2}} = x^{2^{n-2}} \neq 1.$ Therefore, no element of the form \(x^k y\) is an involution. Thus, \(Q_{2^n}\) has exactly one involution, and consequently a unique subgroup of order \(2\).
    \item Consider the semidihedral group $ QD_{2^n}=\langle x,y \mid x^{2^{n-1}}=1,\; y^2=1,\; yxy=x^{r}\rangle, \quad \text{where } r=2^{n-2}-1,$ with \(n\ge 4\). The rotation \(x^{2^{n-2}}\) satisfies $(x^{2^{n-2}})^2 = x^{2^{n-1}} = 1,$ therefore it is an involution. For the elements of the form \(y x^k\), we have $ (y x^k)^2 = y x^k y x^k = (y x^k y) x^k = x^{r k} x^k = x^{(r+1)k}.$ Since \(r+1 = 2^{n-2}\), it follows that $(y x^k)^2 = x^{2^{n-2}k}.$ This element is equal to the identity if, and only if, \(k\) is even modulo \(2^{n-1}\). There are exactly \(2^{n-2}\) such values of \(k\), producing \(2^{n-2}\) involutions of the form \(y x^{2m}\). Therefore, the total number of involutions in \(QD_{2^n}\) is
$2^{n-2} + 1,$ and, consequently, the number of subgroups of order \(2\) is $2^{n-2}+1.$
\end{enumerate}

Dividing these quantities by the respective total quantities of subgroups within each group (see \cite{Marius}) yields the result.  
\end{proof}

In general, for 2-groups, the following result is obtained.

\begin{prop}
   Let G be a finite non-abelian 2-group, then
\begin{equation*}
    \frac{\mathcal{V}(G)}{|G|} < \frac{1}{2\sqrt{10}}. 
\end{equation*}
\end{prop}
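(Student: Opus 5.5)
The plan is to show that the inequality follows from two crude bounds: \(\mathcal{V}(G)<1\) and \(|G|\ge 8\). No delicate counting of subgroups is needed. Along the way I would note the sharper \(D_8\) case to see how much room is left.

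First, the simple subgroups of \(G\). As in the proof of the previous proposition, \(G\) is a \(2\)-group, hence nilpotent and in particular solvable. So every simple subgroup of \(G\) is cyclic of prime order, that is, of order \(2\). Thus \(s(G)\) is the number of involutions of \(G\), and \(s(G)\) counts a subset of \(L(G)\).

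Second, the strict inequality \(s(G)<|L(G)|\). The trivial subgroup \(\{e\}\) lies in \(L(G)\) but is not simple, by the usual convention that a simple group is nontrivial. Also \(G\) itself is not simple, since a non-abelian group cannot have order \(2\). So at least two members of \(L(G)\) are not counted by \(s(G)\), giving \(s(G)\le |L(G)|-2\) and hence \(\mathcal{V}(G)<1\).

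Third, the size of \(G\). Every group of order \(2\) or \(4\) is abelian, so a non-abelian \(2\)-group satisfies \(|G|\ge 8\). Combining,
\[
\frac{\mathcal{V}(G)}{|G|}<\frac{1}{8}<\frac{1}{2\sqrt{10}},
\]
where the last inequality is equivalent to \(2\sqrt{10}<8\), i.e.\ \(40<64\).

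The only point needing care is the convention that the trivial group is not counted as simple; this convention is used implicitly throughout the paper, for instance in the count for cyclic groups, where \(s(C_n)=\omega(n)\). For the sharper \(D_8\) case, the earlier formula for \(D_{2^n}\) gives \(\mathcal{V}(D_8)=5/10\), so \(\mathcal{V}(D_8)/8=1/16\), well below the bound. This suggests the stated constant is far from sharp, but the crude argument above already suffices. I expect no genuine obstacle.
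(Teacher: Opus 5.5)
Your argument is correct, and it takes a genuinely different and much more elementary route than the paper's.

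\textbf{The paper's route.} It uses Ito's inequality $(t+1)^2\le k(G)|G|$, with $t$ the number of involutions. It combines this with Gustafson's bound $k(G)/|G|\le 5/8$ for non-abelian groups to get $t+1\le\sqrt{5/8}\,|G|$, that is, $\mathcal{V}(G)\le\sqrt{5/8}\,|G|/|L(G)|$. Dividing by $|G|$ and using (via Lazorec) that $|L(G)|>5$ then gives exactly $\sqrt{5/8}/5=1/(2\sqrt{10})$. So the paper's bound decays with the number of subgroups, $\mathcal{V}(G)/|G|\le\sqrt{5/8}/|L(G)|$. It genuinely needs the involution-density factor $\sqrt{5/8}$: the crude estimate $t<|G|$ together with $|L(G)|\ge 6$ would only give $1/6$, which exceeds $1/(2\sqrt{10})$.

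\textbf{Your route.} Your bound decays with the order, $\mathcal{V}(G)/|G|\le 1/|G|$, and needs only $|G|\ge 8$. This gives the sharper constant $1/8$ with no character theory or external results. You do not even need the strict inequality $\mathcal{V}(G)<1$, so the convention about the trivial group is moot: $\mathcal{V}(G)\le 1$ already gives $\mathcal{V}(G)/|G|\le 1/8<1/(2\sqrt{10})$.

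\textbf{What each buys.} The paper's approach yields the intermediate structural fact that at most a $\sqrt{5/8}$ fraction of a non-abelian $2$-group squares to the identity. For the proposition as stated, your short argument suffices and shows that the stated constant is far from sharp.
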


\begin{proof}
 From \cite{It1951}, we have $(t+1)^2 \leq k(G)|G|$, where $(t+1)$ denotes the number of involutions in $G$ and $k(G)$ denotes the number of conjugacy classes of $G$. Hence it follows

\begin{align*}
   (t+1)^2 \leq k(G)|G| \iff \frac{(t+1)^2}{|G|^2} &\leq \frac{k(G)}{|G|} = cp(G).\\
\end{align*}

By \cite{Gustafson01111973} we have

\begin{align*}
   (t+1)^2 \leq k(G)|G| \iff \frac{(t+1)^2}{|G|^2} &\leq  \frac{5}{8}\\
    \frac{(t+1)}{|G|} &\leq \sqrt{\frac{5}{8}}\\
    \frac{(t+1)}{|G|}\frac{|L(G)|}{|L(G)|} &\leq \sqrt{\frac{5}{8}}\\
    \mathcal{V}(G)\beta(G) &\leq \sqrt{\frac{5}{8}}\\
    \mathcal{V}(G)&\leq \frac{1}{\beta(G)}\sqrt{\frac{5}{8}}, \\
\end{align*}

\noindent{where $\beta$ is defined in \cite{Lazorec}. By the contrapositive of Proposition 2.2.2 in \cite{Lazorec}, we have}

\begin{align*}
    \mathcal{V}(G)&< \frac{|G|}{5}\sqrt{\frac{5}{8}} \iff
    \frac{\mathcal{V}(G)}{|G|}< \frac{1}{2\sqrt{10}}. 
\end{align*}

\end{proof}

\begin{dft} \label{Def. 1}
   Let $p$ be an odd prime. We recursively define families of finite extraspecial groups.
\[
\{G_n\}_{n \ge 0}, \qquad  \{H_n\}_{n \ge 0}, 
\]
as follows:

\begin{itemize}
    \item $G_0 := C_{p^2}  \rtimes C_p,
\quad
G_{n+1} := (C_p \times G_n) \rtimes C_p;$
    \item $H_0 := (C_p \times C_p) \rtimes C_p,
\quad
H_{n+1} := (C_p \times H_n) \rtimes C_p.$
\end{itemize}

\noindent{Where, at each iteration, the semidirect product is defined in a manner analogous to Heisenberg-type constructions.}
\end{dft}

\begin{prop}
For $n\geq 1$ we have
    $\lim_{p \longrightarrow \infty} \mathcal{V}(G_{n}) = \lim_{p \longrightarrow \infty}\mathcal{V}(H_{n}) =0.$ 
\end{prop}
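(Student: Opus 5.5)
The plan is to show that in both families the numerator of $\mathcal{V}$ grows like a fixed power of $p$, while the denominator contains a family of subgroups growing like a strictly larger power of $p$.

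\textbf{Setup and numerator.} By Definition \ref{Def. 1}, $G_n$ and $H_n$ are extraspecial $p$-groups of order $p^{2m+1}$ with $m=n+1\geq 2$. $H_n$ is the one of exponent $p$ and $G_n$ the one of exponent $p^2$; I take this identification of the ``Heisenberg-type'' construction as given. Write $E$ for either group and $Z=Z(E)$, $|Z|=p$. Let $V=E/Z\cong\mathbb{F}_p^{2m}$, carrying the nondegenerate alternating form induced by commutation. Since $E$ is a $p$-group, its simple subgroups are exactly its subgroups of order $p$, as in the earlier proofs for $2$-groups. Hence
\[
s(E)\leq \frac{|E|-1}{p-1}=O(p^{2m}).
\]

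\textbf{Denominator.} Lower-bounding only the subgroups containing $Z$ is not enough. They correspond to subspaces of $V$, and the middle Gaussian binomial gives about $p^{m^2}$ of them. For $m=2$ this is only $p^4$, the same order as $s(E)$, so it gives only a bounded ratio. Instead I will count elementary abelian subgroups of order $p^2$ that do \emph{not} contain $Z$.

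Take a $2$-dimensional totally isotropic subspace $W\leq V$. Its preimage $A_W$ is abelian of order $p^3$ and contains $Z$. If $A_W$ is elementary abelian, it has $p^2+p+1$ subgroups of order $p^2$, of which only $p+1$ contain $Z$. So each such $W$ contributes $p^2$ subgroups of order $p^2$ not containing $Z$. Such a subgroup $B$ satisfies $BZ=A_W$, so it determines $W$, and these contributions are disjoint across different $W$.

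For $H_n$ every $A_W$ is elementary abelian, since the exponent is $p$. For $G_n$ with $p$ odd, $E$ is regular, so $\Omega_1(E)$ has exponent $p$ and index $p$. It then suffices to restrict to isotropic $W$ lying in the hyperplane $\Omega_1(E)/Z$ of $V$, which again makes $A_W$ elementary abelian.

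\textbf{Counting isotropic planes.} The number of totally isotropic planes in a symplectic space of dimension $2m$ is
\[
\frac{(p^{2m}-1)(p^{2m-2}-1)}{(p^2-1)(p-1)}\asymp p^{4m-5}.
\]
Restricting to a hyperplane $U$ loses at most a factor of order $p$. To see this, count ordered pairs $(u,v)$ in $U$ with $u\ne 0$ and $v\in u^{\perp}\cap U$, $v\notin\langle u\rangle$. There are about $p^{2m-1}\cdot p^{2m-2}$ such pairs, and dividing by $|GL_2(\mathbb{F}_p)|\asymp p^4$ gives about $p^{4m-7}$ planes.

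Hence $|L(E)|\gg p^2\cdot p^{4m-7}=p^{4m-5}$ for $G_n$, and $|L(E)|\gg p^{4m-3}$ for $H_n$. This gives
\[
\mathcal{V}(E)=O\!\left(p^{2m-(4m-5)}\right)=O\!\left(p^{5-2m}\right).
\]
For $H_n$ this bound tends to $0$ for every $m\geq 2$. For $G_n$ with $m=2$ (that is, $n=1$) it is only $O(p)$, so the $G_n$ case needs either a sharper count or a sharper numerator; I would try the latter first. Elements of order $p$ in $G_n$ all lie in $\Omega_1(E)$, which has order $p^{2m}$. So $s(G_n)\leq (p^{2m}-1)/(p-1)=O(p^{2m-1})$. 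Also, $\Omega_1(E)$ is itself a group of exponent $p$ and order $p^{2m}$ whose center has order $p^2$. Counting its elementary abelian subgroups of order $p^2$ directly, via ordered commuting pairs of order-$p$ elements divided by $|GL_2(\mathbb{F}_p)|$, should give a count of order $p^{2m-1}\cdot p^{2m-2}/p^4\cdot p^{\,\geq 2}$. That exceeds $p^{2m-1}$ once $m\geq 2$.

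\textbf{Main obstacle.} The hardest step is making this $G_n$ count rigorous: pinning down $\Omega_1(G_n)$, its center, and the centralizer sizes of its elements. If the direct count falls short, the fallback is to add the subgroups of order $p^3$ and $p^4$ inside the preimages of isotropic subspaces, each contributing further powers of $p$. Either way, the conclusion is $\mathcal{V}\to 0$ as $p\to\infty$ for each fixed $n\geq 1$.
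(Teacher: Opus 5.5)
Your argument for $H_n$ (every $n\ge 1$) and for $G_n$ with $n\ge 2$ is sound. It also takes a different and sharper route than the paper. The paper bounds $|L(G_n)|$ below only by the roughly $p^{(n+1)^2/4}$ subspaces of a normal elementary abelian subgroup of order $p^{n+1}$. Against its numerator $(p^{2n+2}-1)/(p-1)\asymp p^{2n+1}$, that forces the limit to $0$ only for $n\ge 7$. Your symplectic count of complements to $Z$ in preimages of isotropic planes instead works for all $m\ge 2$ in the exponent-$p$ case. There are two small slips. Restricting isotropic planes to the hyperplane $\Omega_1(E)/Z$ costs a factor $p^2$, not $p$, as your own figure $p^{4m-7}$ shows. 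And the displayed $O(p^{5-2m})$ is the $G_n$ bound; for $H_n$ you actually get $O(p^{3-2m})$.

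The genuine gap is $G_1$ ($m=2$), and neither of your repairs closes it.

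\textbf{The commuting-pair count falls short.} In $\Omega_1(G_1)$, which has order $p^4$ and center $X$ of order $p^2$, centralizers have order $p^4$ for $a\in X$ and $p^3$ otherwise. The commuting-pair count then gives exactly $p^3+2p^2+p+1$ elementary abelian subgroups of order $p^2$. This is $\asymp p^{4m-5}$, and $4m-5>2m-1$ holds only for $m>2$. At $m=2$ the count has the same order as $s(G_1)=p^3+p^2+p+1$, so your claim that it ``exceeds $p^{2m-1}$ once $m\ge 2$'' is off by one. Add the $p^3$ cyclic subgroups of order $p^2$ and every count you propose at orders $\le p^2$ is still $\Theta(p^3)$. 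The resulting upper bound for $\mathcal{V}(G_1)$ therefore tends to a positive constant, not to $0$.

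\textbf{The fallback fails for the same reason.} When $m=2$, isotropic subspaces have dimension at most $2$, so their preimages have order at most $p^3$. There are only $p^3+p^2+p+1$ of them, and none contains a subgroup of order $p^4$.

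\textbf{The missing idea is the one you discarded: subgroups containing $Z$.} Take the preimages of \emph{all} $2$-dimensional subspaces of $G_1/Z\cong\mathbb{F}_p^4$, isotropic or not; the non-isotropic ones give non-abelian subgroups of order $p^3$, which your abelian counts miss. These are $\binom{4}{2}_p=(p^2+1)(p^2+p+1)$ distinct subgroups, so
\[
\mathcal{V}(G_1)\le \frac{p^3+p^2+p+1}{(p^2+1)(p^2+p+1)}\longrightarrow 0 .
\]
This count was useless against your crude numerator $O(p^{2m})$, but it suffices against the sharpened $s(G_n)=(p^{2m}-1)/(p-1)$. In fact $\binom{2m}{2}_p\asymp p^{4m-4}$ gives $\mathcal{V}(G_n)=O(p^{3-2m})$ for every $n\ge 1$ at once.
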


\begin{proof}
We will carry out the proof for $G_{n+1}$, observing that it is analogous for $H_{n+1}$ after making the necessary adaptations. First, it is necessary to define $s(G_{n+1})$. Since $G_n$ is a $p$-group, there are no cyclic subgroups of prime order different from $p$; it suffices to count the subgroups of order $p$. Denote $\Omega_1(G_n)=\{g \in G_n : g^p = 1\}$. We want $|\Omega_1(G_n)|$. Structurally, $G_n$ is a class-2 group obtained from one generator $a$ of order $p^2$ and several generators of order $p$. The calculation of invariants yields

   \[
   Z(G_n)\cong C_p^{n+1},\qquad G_n/Z(G_n)\cong C_p^{n+2},
   \]

\noindent{therefore, the $p$-torsion part has $p$-rank $2n+2$. Denote any element as $a^\alpha \cdot u$ with $\alpha \in \{0,\dots,p^2-1\}$ and $u$ in a certain substructure $U$ of exponent $p$ and order $p^{2n+1}$ (generated by the generators of order $p$ coming from the $C_p$ factors and by the central generators of order $p$). Note that $U$ has exponent $p$, hence $u^p=1$ for every $u \in U$. For class-2 groups and odd $p$, the power identity holds in which the mixed terms involve binomial coefficients $\binom{p}{k}$ with $0<k<p$; these coefficients are multiples of $p$. Since the commutators here have order $p$, these mixed terms vanish. Thus}

$$
   (a^\alpha u)^p = a^{p\alpha}.
$$

Hence $(a^\alpha u)^p = 1$ if and only if $a^{p\alpha} = 1$, that is, $p \mid \alpha$.
Therefore, there are exactly $p$ choices for $\alpha$ satisfying $p \mid \alpha$. Thus

\[
|\Omega_1(G_n)| = p \cdot p^{2n+1} = p^{2n+2}.
\]

Removing the identity, the number of elements of order exactly $p$ is $p^{2n+2} - 1$. Each cyclic subgroup of order $p$ has $p-1$ generators, hence the number of cyclic subgroups of prime order is

\begin{equation}\label{eq. 3}
   \frac{p^{2n+2}-1}{p-1}. 
\end{equation}
   
Now it is necessary to define $|L(G_{n+1})|$. Determining this factor in general is a complex task. The strategy in this case will be to employ estimates for $|L(G_{n+1})|$. A lower bound, which we denote by $L_{-}$, is $p^{\frac{(n+1)^2}{4}}$, while an upper bound ($L_{+}$) is $(2n+3)p^{(2n+3)(2n+2)}$. Indeed, for $L_{-}$, in \(G_0 = C_{p^2} \rtimes C_p\), the subgroup of order \(p\) of \(C_{p^2}\) is characteristic in \(C_{p^2}\) and therefore normal in \(G_0\).
By the recursive construction of $G_{n+1} = (C_p \times G_n) \rtimes C_p$, the factors of order \(p\) introduced at each step remain normal. Thus, in \(G_n\) there exists a normal subgroup

\[
V\cong C_p^{\,n+1},
\qquad |V|=p^{\,n+1},
\]

generated by the subgroup of order \(p\) of \(C_{p^2}\) together with the \(n\) factors \(C_p\) added throughout the construction.
Viewed as a vector space over \(\mathbb{F}_p\) of dimension \(n+1\), the number of subgroups of \(V\) coincides with the number of its subspaces:
\[
|L(V)|
=\sum_{k=0}^{n+1}\begin{bmatrix} n+1 \\ k \end{bmatrix}_p,
\]
where \(\begin{bmatrix} n+1 \\ k \end{bmatrix}_p\) denotes the Gaussian binomial coefficient \cite{He2022SomeIO}.
A standard estimate is
\[
\begin{bmatrix} n+1 \\ k \end{bmatrix}_p \ge p^{\,k(n+1-k)}.
\]

Choosing \(k=\left\lfloor \frac{n+1}{2}\right\rfloor\), where the term is maximal, one obtains, for some constant \(C=O(n)\),
\[
\begin{bmatrix} n+1 \\ \left\lfloor \frac{n+1}{2}\right\rfloor \end{bmatrix}_p
\ge p^{\frac{(n+1)^2}{4}-Cn}.
\]
Ignoring the linear term in the exponent, we obtain the lower bound
\[
|L(G_n)| \ge |L(V)| \ge p^{\frac{(n+1)^2}{4}}.
\]

For $L_{+}$, consider the number of minimal generators, which we denote by $d(G_0)$. In general, $d(G_0)=2$ and $d(G_{n+1})=d(G_n)+2$, hence $d(G_n)=2n+2.$
Every subgroup is generated by a number of elements less than or equal to $d(G_n)$, hence

\[
|L(G_n)|\le\sum_{k=0}^{d(G_n)}|G_n|^k \le (d(G_n)+1)|G_n|^{d(G_n)}.
\]
Substituting $d(G_n)=2n+2$ and $|G_n|=p^{2n+3}$, one obtains
\[
|L(G_n)| \le (2n+3)p^{(2n+3)(2n+2)}.
\]
Now, taking the order relation
\[
\mathcal{V}(G_n)_{-} \le \mathcal{V}(G_n) \le \mathcal{V}(G_n)_{+},
\]
\noindent where $\mathcal{V}(G_n)_{-} = \frac{p^{2n+2}-1}{(p-1)L_{-}}$ and $\mathcal{V}(G_n)_{+} = \frac{p^{2n+2}-1}{(p-1)L_{+}}$, and taking the limit, the result follows.
\end{proof}





\subsection{Supersolvable cases}

\begin{prop}
   Let $\mathrm{Dic}_n=\langle a,x \mid a^{2n}=1,\, x^2=a^n,\, x^{-1}ax=a^{-1}\rangle$ be a dicyclic group with order 4n, then

\begin{equation*}
      \mathcal{V}(\mathrm{Dic}_n) = \frac{\omega(2n)}{\tau(2n)+\sigma(n)}.
\end{equation*}    

\end{prop}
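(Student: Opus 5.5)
The plan is to compute numerator and denominator of $\mathcal{V}(\mathrm{Dic}_n)$ separately, using the normal cyclic subgroup $A=\langle a\rangle\cong C_{2n}$ of index $2$. The group $\mathrm{Dic}_n$ is solvable, since $A$ is abelian and $\mathrm{Dic}_n/A\cong C_2$. So, as in the proof of the first proposition, its simple subgroups are exactly its subgroups of prime order. Every element outside $A$ has the form $a^ix$, and the first step is to check that
\[
(a^ix)^2=a^i(xa^ix^{-1})x^2=a^ia^{-i}a^n=a^n .
\]
Since $a^n$ has order $2$, each $a^ix$ has order $4$. Hence every subgroup of prime order lies in $A$. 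By the cyclic-group proposition these are the unique subgroups of order $p$ for each prime $p\mid 2n$. This gives $s(\mathrm{Dic}_n)=\omega(2n)$.

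For $|L(\mathrm{Dic}_n)|$, split the subgroups $H$ into two kinds: those with $H\le A$, and those not contained in $A$. The subgroups of the first kind are exactly the subgroups of $C_{2n}$, and there are $\tau(2n)$ of them.

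For the second kind, suppose $H\not\le A$ and pick $a^ix\in H$. The identity above gives $a^n\in H\cap A$. Since $H\cap A$ is a subgroup of $A$ containing $a^n$, it equals $\langle a^d\rangle$ for a unique divisor $d\mid n$. Because $[H:H\cap A]=2$, we get
\[
H=\langle a^d,\,a^ix\rangle=\langle a^d\rangle\cup\langle a^d\rangle a^ix .
\]
This subgroup depends only on $i \bmod d$.

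Conversely, fix $d\mid n$ and $i\in\{0,\dots,d-1\}$, and set $K=\langle a^d\rangle\cup\langle a^d\rangle a^ix$. One checks that $K$ is closed under multiplication using the relations:
\begin{itemize}
\item $x a^{k} = a^{-k}x$;
\item $(a^ix)(a^jx)=a^{i-j}a^n$, which lies in $\langle a^d\rangle$ because $d\mid n$.
\end{itemize}
Distinct pairs $(d,i)$ give distinct subgroups, since $d$ is recovered from $K\cap A$ and $i \bmod d$ from the coset $\langle a^d\rangle a^ix$. So the number of subgroups of the second kind is $\sum_{d\mid n} d=\sigma(n)$.

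Adding the two kinds gives $|L(\mathrm{Dic}_n)|=\tau(2n)+\sigma(n)$, and dividing yields the claimed formula. As a sanity check, $n=1$ gives $\mathrm{Dic}_1\cong C_4$, where the formula returns $1/3$, which is correct.

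The main obstacle is the bijection between non-cyclic-part subgroups and pairs $(d,i)$. Both the closure check and the requirement $d\mid n$ rather than $d\mid 2n$ depend on the forced containment $a^n\in H$. I will verify that step carefully.
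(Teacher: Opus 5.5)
Your proof is correct. The numerator argument is the same as the paper's. The identity $(a^kx)^2=a^n$ shows that every element outside $A=\langle a\rangle$ has order $4$, so all prime-order subgroups lie in the cyclic group $A$, which gives $s(\mathrm{Dic}_n)=\omega(2n)$.

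Where you go further is the denominator. The paper's proof stops after the numerator and never justifies $|L(\mathrm{Dic}_n)|=\tau(2n)+\sigma(n)$; it treats this count as known, just as it says explicitly for $D_{2n}$ that it suffices to justify the numerator. Your classification supplies that missing step and makes the formula self-contained. You show that the subgroups not contained in $A$ are exactly $\langle a^d\rangle\cup\langle a^d\rangle a^ix$ with $d\mid n$ and $0\le i<d$. The forced containment $a^n\in H$ is precisely what restricts $d$ to divisors of $n$ rather than of $2n$. It is also what makes the closure check $(a^ix)(a^jx)=a^{i-j}a^n\in\langle a^d\rangle$ work.
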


\begin{proof}
Let $\mathrm{Dic}_n=\langle a,\,x \mid a^{2n}=1,\, x^2=a^n,\,x^{-1}ax=a^{-1}\rangle$ be the dicyclic group of order $4n$. The subgroup $\langle a\rangle$ is cyclic of order $2n$; by a basic property of cyclic groups, for each divisor $d$ of $2n$ there exists exactly one subgroup of $\langle a\rangle$ of order $d$. In particular, for each prime $p$ that divides $2n$ there exists exactly one subgroup of order $p$, generated by $a^{2n/p}$. Every element of $\mathrm{Dic}_n$ that does not belong to $\langle a\rangle$ can be written in the form $a^k x$; Using the conjugation relation, we obtain $(a^k x)^2=a^k x a^k x=a^k a^{-k} x^2=a^n$. Since $a^n$ has order 2 (because $(a^n)^2=a^{2n}=1$ and $(a^n\neq1)$), it follows that each element of the form $a^k x$ has order 4 (and, in particular, cannot generate any subgroup of prime order greater than 2). Furthermore, the only involution of $\langle a\rangle$ is $a^n$, and there are no involutions outside of $\langle a\rangle$ because, as we have seen, for $g=a^k x$ we have $g^2=a^n\neq1$. Therefore, the only subgroup of order 2 is $\langle a^n\rangle$. It follows that all prime subgroups of $\mathrm{Dic}_n$ are exactly the subgroups of $\langle a\rangle$ corresponding to the primes that divide $2n$; consequently, the number of simple subgroups (that is, of prime order) of $\mathrm{Dic}_n$ is equal to the number of distinct primes that divide $2n$. In arithmetic notation, if $\omega(m)$ denotes the number of distinct primes that divide $m$, then $s(\mathrm{Dic}_n)=\omega(2n).$
\end{proof}

\begin{prop}
    Let $G = D_{2n}$, then
    
\begin{equation*}
  \mathcal{V}(G) = \frac{n + \omega(n)}{\tau(n) \;+\; \sigma(n)}. 
\end{equation*}

\end{prop}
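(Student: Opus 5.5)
The plan is to compute numerator and denominator of $\mathcal{V}(D_{2n})$ separately, as in the dicyclic case. Write $D_{2n}=\langle x,y \mid x^{n}=1,\; y^2=1,\; yxy=x^{-1}\rangle$ for the group of order $2n$ with rotation subgroup $\langle x\rangle\cong C_n$. I read $D_{2n}$ this way, since the formula involves $\tau(n)$ and $\sigma(n)$.

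For the numerator, $D_{2n}$ is solvable (indeed supersolvable), so, as in the proof of equation \eqref{eq. 1.1}, its simple subgroups are exactly its subgroups of prime order. I will split the elements of prime order into rotations and reflections.

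\emph{Rotations.} Inside the cyclic group $\langle x\rangle$ there is, for each prime $p\mid n$, exactly one subgroup of order $p$, namely $\langle x^{n/p}\rangle$. This gives $\omega(n)$ subgroups.

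\emph{Reflections.} Each element $yx^k$, $0\le k<n$, satisfies $(yx^k)^2=yx^kyx^k=x^{-k}x^k=1$. So these $n$ elements are involutions, and they give $n$ distinct subgroups of order $2$.

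Every element outside $\langle x\rangle$ is a reflection, so these two families exhaust all subgroups of prime order. The one point to check is that nothing is counted twice. When $n$ is even, the central involution $x^{n/2}$ is a rotation, so it is already counted among the $\omega(n)$ rotation subgroups (for $p=2$). It is not a reflection, so the two families are disjoint. Hence $s(D_{2n})=n+\omega(n)$.

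For the denominator, I would invoke the classical count $|L(D_{2n})|=\tau(n)+\sigma(n)$ (Cavior; also recorded in \cite{Marius}). To keep the argument self-contained, I would sketch the standard description of all subgroups:
\begin{itemize}
\item a subgroup $H$ contained in $\langle x\rangle$ is one of the $\tau(n)$ cyclic subgroups $\langle x^{n/d}\rangle$, one for each $d\mid n$;
\item a subgroup $H$ not contained in $\langle x\rangle$ satisfies $H\cap\langle x\rangle=\langle x^{n/d}\rangle$ for some $d\mid n$, and then $H=\langle x^{n/d},\,yx^{i}\rangle$ with $0\le i<n/d$.
\end{itemize}
In the second case, distinct values of $i$ give distinct subgroups, because the reflections in $H$ are exactly $yx^{i+jn/d}$. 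So for each divisor $d$ there are $n/d$ such subgroups, and summing over $d\mid n$ gives $\sum_{d\mid n} n/d=\sigma(n)$.

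Dividing the numerator by the denominator yields the stated formula. The only step needing care is the double-counting issue for the rotation involution when $n$ is even, together with the correct parametrization of the non-cyclic subgroups. I expect the latter to be the main (though mild) obstacle if one does not simply cite the known value of $|L(D_{2n})|$.
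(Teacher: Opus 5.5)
Your proposal is correct and follows essentially the same route as the paper: count the $n$ reflections plus one rotation subgroup of order $p$ for each prime $p \mid n$, and use $|L(D_{2n})| = \tau(n)+\sigma(n)$ for the denominator, which the paper takes as known and your sketch also derives. Your explicit handling of the rotation involution $x^{n/2}$ for even $n$ is cleaner than the paper's, which discusses only primes $p>2$ inside $\langle r\rangle$ before adding $\omega(n)$.
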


\begin{proof}
In this case, it suffices to justify the numerator. Let \(D_{2n}=\langle r,s \mid r^{n}=s^{2}=1,\; srs=r^{-1}\rangle\) be the dihedral group of order \(2n\).
A simple subgroup of \(D_{2n}\) necessarily has prime order.
The reflections of \(D_{2n}\) are the elements of the form \(r^{k}s\), with \(k=0,\dots,n-1\), and satisfy \((r^{k}s)^2=1\). Thus, each reflection generates a subgroup of order \(2\), and since there are exactly \(n\) reflections, we obtain \(n\) distinct simple subgroups of this form.

If \(p>2\) is prime and \(H\leq D_{2n}\) is a subgroup of order \(p\), then \(H\) cannot contain reflections, since these have order \(2\). Therefore \(H\subseteq \langle r\rangle\), where \(\langle r\rangle\) is a cyclic subgroup of order \(n\). In a cyclic group there is exactly one subgroup of order \(p\) for each prime \(p\) that divides \(n\); therefore, for each such prime we obtain a unique simple subgroup of \(D_{2n}\) contained in \(\langle r\rangle\).
These subgroups are distinct from those generated by reflections. Consequently, the total number of simple subgroups of \(D_{2n}\) is the sum of the number of reflections, \(n\), with the number of distinct primes that divide \(n\), denoted by \(\omega(n)\). Thus, \(D_{2n}\) has exactly \(n+\omega(n)\) simple subgroups.
\end{proof}

\begin{cor} For n = p, prime follows
    $\lim_{p \longrightarrow \infty } \mathcal{V}(D_{2p}) = 1.$
\end{cor}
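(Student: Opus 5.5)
The plan is to specialize the formula from the preceding proposition, $\mathcal{V}(D_{2n}) = \frac{n+\omega(n)}{\tau(n)+\sigma(n)}$, to $n=p$ prime and then take the limit. For $p$ prime we have $\omega(p)=1$, $\tau(p)=2$ and $\sigma(p)=p+1$. Substituting gives the closed form
\begin{equation*}
\mathcal{V}(D_{2p}) = \frac{p+1}{p+3}.
\end{equation*}
This coincides with the value obtained for $\mathcal{V}(C_p^2)$, which is a useful consistency check.

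Before relying on the formula, I would verify the denominator $\tau(n)+\sigma(n)$ directly, since the preceding proof only justified the numerator. For $n=p$ this is a short count. The subgroups of $D_{2p}$ are:
\begin{itemize}
\item the trivial subgroup;
\item the rotation subgroup $\langle r\rangle$ of order $p$;
\item the $p$ subgroups $\langle r^k s\rangle$ of order $2$;
\item the whole group.
\end{itemize}
No other orders occur, since by Lagrange the only divisors of $2p$ are $1,2,p,2p$. The subgroup of order $p$ is unique, being normal of index $2$ and equal to the Sylow $p$-subgroup when $p$ is odd. This gives $|L(D_{2p})| = p+3 = \tau(p)+\sigma(p)$.

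The count of simple subgroups is likewise direct. There are $p$ reflection subgroups and one subgroup of order $p$, for a total of $p+1$.

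The limit is then immediate: $\frac{p+1}{p+3} = 1 - \frac{2}{p+3} \to 1$ as $p\to\infty$. The only point needing care is the case $p=2$, where $D_4\cong C_2\times C_2$. There the rotation $r$ itself has order $2$, so the count must be redone: $D_4$ has $3$ subgroups of order $2$ and $5$ subgroups in total. This case does not affect the limit, since it concerns a single value of $p$, so I would simply restrict to odd $p$. I expect no real obstacle; the only substantive step is confirming the subgroup count for the denominator.
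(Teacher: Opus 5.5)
Your proposal is correct and follows the paper's route: substitute $n=p$ into $\mathcal{V}(D_{2n})=\frac{n+\omega(n)}{\tau(n)+\sigma(n)}$ to get $\frac{p+1}{p+3}$ and let $p\to\infty$. Your direct check that $|L(D_{2p})|=p+3$ fills in the denominator that the paper's proof leaves unjustified, and the case split at $p=2$ is not actually needed, since $D_4\cong C_2\times C_2$ gives the same values $3$ and $5$.
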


\begin{cor} 
    If $G$ is not nilpotent then for some $p$-prime $ \mathcal{V}(G)\leq \mathcal{V}(D_{2p})$.
\end{cor}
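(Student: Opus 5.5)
The plan is to observe that the inequality is essentially a consequence of the previous corollary, $\lim_{p\to\infty}\mathcal{V}(D_{2p})=1$, together with the fact that $\mathcal{V}(G)$ is bounded away from $1$ for each fixed finite group $G$. I expect the hypothesis that $G$ is not nilpotent to play no role beyond guaranteeing that $G$ is nontrivial. (A nilpotent group would be handled by the same argument, but the statement is only claimed for the non-nilpotent case.)

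First, I will make $\mathcal{V}(D_{2p})$ explicit using the preceding proposition on $D_{2n}$ with $n=p$ prime. Since $\omega(p)=1$, $\tau(p)=2$ and $\sigma(p)=p+1$, this gives
\begin{equation*}
\mathcal{V}(D_{2p})=\frac{p+1}{p+3}=1-\frac{2}{p+3}.
\end{equation*}

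Second, I will bound $\mathcal{V}(G)$ strictly below $1$. The trivial subgroup $\{e\}$ always belongs to $L(G)$ but is not simple, since simple groups are nontrivial by convention. Hence $s(G)\le |L(G)|-1$, and so
\begin{equation*}
\mathcal{V}(G)\le 1-\frac{1}{|L(G)|}.
\end{equation*}
This holds for every finite group, and $|L(G)|$ is a fixed finite number depending only on $G$.

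Finally, I will choose a prime $p$ with $p+3\ge 2|L(G)|$, which is possible since there are infinitely many primes. Then
\begin{equation*}
1-\frac{1}{|L(G)|}\le 1-\frac{2}{p+3},
\end{equation*}
so combining the two bounds gives $\mathcal{V}(G)\le \mathcal{V}(D_{2p})$.

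The only step that needs care is the convention that the trivial group is not simple, so that $s(G)<|L(G)|$ strictly. With that convention fixed, there is no real obstacle. One could also note that non-nilpotency forces $|G|$ to be non-prime, which rules out the extreme case $L(G)=\{\{e\},G\}$ with $G$ of prime order, but this refinement is not needed for the argument.
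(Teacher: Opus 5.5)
Your proof is correct and is essentially the argument the paper intends: the paper gives no written proof, but places this corollary directly after $\lim_{p\to\infty}\mathcal{V}(D_{2p})=1$, and combining that limit with $\mathcal{V}(G)\le 1-1/|L(G)|<1$ is exactly what you do. Your explicit choice of a prime with $p+3\ge 2|L(G)|$ makes that limit argument effective, and you are right that the non-nilpotency hypothesis is never used.
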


\begin{prop}
    Let $G = C_{p} \rtimes C_3$ with p prime, where $p\equiv1\pmod 6$, then

\begin{equation*}
  \mathcal{V}(G) =   \frac{p+1}{p+3}. 
\end{equation*}

\end{prop}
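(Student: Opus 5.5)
The plan is to compute $s(G)$ and $|L(G)|$ separately by elementary Sylow theory, in the same spirit as the dihedral case $D_{2p}$. I read the semidirect product as the nontrivial one: the hypothesis $p\equiv 1 \pmod 6$ gives $3\mid p-1$, so $\mathrm{Aut}(C_p)\cong C_{p-1}$ contains an element of order $3$. Hence $G$ is the non-abelian group of order $3p$. (For the trivial action $G\cong C_{3p}$, and the earlier cyclic proposition gives $2/4$ instead.)

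\textbf{Step 1: classify subgroups by order.} By Lagrange, a subgroup of $G$ has order $1$, $3$, $p$ or $3p$.
\begin{itemize}
\item Orders $1$ and $3p$ give exactly one subgroup each.
\item The normal factor $C_p$ is the unique Sylow $p$-subgroup, since $n_p\equiv 1\pmod p$ and $n_p\mid 3$ with $p>3$ force $n_p=1$.
\item For the Sylow $3$-subgroups, $n_3\in\{1,p\}$. If $n_3=1$, then $G$ would be the direct product of two normal subgroups of coprime order, hence abelian. This contradicts the nontriviality of the action, so $n_3=p$.
\end{itemize}
Thus $|L(G)|=1+1+1+p=p+3$.

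\textbf{Step 2: count simple subgroups.} Since $G$ is solvable, as noted in the proof of the first proposition its simple subgroups are exactly those of prime order. These are the unique subgroup of order $p$ and the $p$ subgroups of order $3$, so $s(G)=p+1$. The group $G$ itself is not simple because $C_p\trianglelefteq G$, and the trivial group is not counted. Dividing gives $\mathcal{V}(G)=\frac{p+1}{p+3}$.

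\textbf{Main obstacle.} The only delicate point is justifying $n_3=p$, which rests on the action being nontrivial. I would state this convention explicitly at the start of the proof, noting that it is what the congruence hypothesis is designed to allow. Everything else is routine counting.
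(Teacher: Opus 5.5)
Your proof is correct and follows essentially the same route as the paper. Sylow counting gives $n_p=1$ and, using nontriviality of the action, $n_3=p$, so $s(G)=p+1$; adding the trivial subgroup and $G$ itself gives $|L(G)|=p+3$. Your explicit Lagrange classification of subgroup orders simply makes precise the paper's brief remark that the denominator is obtained by adding these two subgroups.
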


\begin{proof}
Since every group of prime order is simple, the only possible simple subgroups here have order $p$ or order $3$. Consider $|G| = 3p$, where $p$ is prime and $p \equiv 1 \pmod{6}$. By Sylow’s theorems:

\begin{itemize}
    \item The number of Sylow $p$-subgroups $n_p$ divides $3$ and satisfies $n_p \equiv 1 \pmod{p}$. Since $p > 3$, the only possibility is $n_p = 1$. Hence, there exists a unique subgroup of order $p$, which is therefore normal in $G$.
    
    \item The number of Sylow $3$-subgroups $n_3$ divides $p$ and satisfies $n_3 \equiv 1 \pmod{3}$. Thus, $n_3$ is either $1$ or $p$. If $n_3 = 1$, then the Sylow $3$-subgroup is normal, and since the Sylow $p$-subgroup is also normal, $G$ would be the direct product $C_p \times C_3$, hence cyclic and abelian, isomorphic to $C_{3p}$. However, since $p \equiv 1 \pmod{3}$, there exist automorphisms of $C_p$ of order $3$. Therefore, there exists a nontrivial action $C_3 \to \operatorname{Aut}(C_p)$, and by taking the corresponding nontrivial semidirect product, one obtains a nonabelian group $G$. In this case, there can be no normal Sylow $3$-subgroup, and thus $n_3 = p$.
\end{itemize}

Therefore, if the action is trivial, yielding the direct product $C_p \times C_3$, there is exactly one subgroup of order $p$ and one subgroup of order $3$, for a total of $2$ simple subgroups. If the action is nontrivial, there is one subgroup of order $p$ and $p$ subgroups of order $3$, for a total of $p+1$ simple subgroups. Since the hypothesis $p \equiv 1 \pmod{6}$ guarantees the existence of an action of order $3$, the nontrivial semidirect product exists, and for this nonabelian group $G = C_p \rtimes C_3$, the natural answer is $p+1$. The denominator is obtained by considering the trivial subgroups. 

\end{proof}

\begin{cor} 
    $\lim_{p \longrightarrow \infty } \mathcal{V}(C_{p} \rtimes C_3) = 1.$
\end{cor}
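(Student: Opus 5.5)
The plan is to feed the closed form from the preceding Proposition, $\mathcal{V}(C_p \rtimes C_3) = \frac{p+1}{p+3}$ for primes $p \equiv 1 \pmod 6$, straight into the limit. Before that I will check that the formula is consistent, since the limit depends only on it. The numerator $p+1$ counts one subgroup of order $p$ and $p$ subgroups of order $3$, as established in that proof via Sylow's theorems.

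For the denominator $p+3$, I will list all subgroups of a group of order $3p$. By Lagrange their orders lie in $\{1,3,p,3p\}$, and the count is:
\begin{itemize}
\item the trivial subgroup;
\item the whole group;
\item the unique normal Sylow $p$-subgroup;
\item the $p$ Sylow $3$-subgroups.
\end{itemize}
This gives $|L(G)| = p+3$, in agreement with the Proposition.

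The limit is then taken along primes $p \equiv 1 \pmod 6$. Dirichlet's theorem on primes in arithmetic progressions guarantees there are infinitely many such primes, so the limit is meaningful.

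Dividing numerator and denominator by $p$ gives
\[
\frac{p+1}{p+3} = \frac{1 + 1/p}{1 + 3/p} \longrightarrow 1 \quad (p \to \infty).
\]
Equivalently, $1 - \mathcal{V} = \frac{2}{p+3} \to 0$. This is the same computation as for $\mathcal{V}(C_p^2) = \frac{p+1}{p+3}$ in the earlier corollary.

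The only real subtlety is the domain of the limit: the group $C_p \rtimes C_3$ with nontrivial action exists only when $p \equiv 1 \pmod 3$. I will therefore state explicitly that $p$ runs over primes $p \equiv 1 \pmod 6$, and invoke Dirichlet's theorem for the existence of infinitely many of them.
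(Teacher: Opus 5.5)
Your proposal is correct and follows the paper's route: the corollary is just the limit of the closed form $\mathcal{V}(C_p \rtimes C_3) = \frac{p+1}{p+3}$ from the preceding proposition. Your explicit count of the $p+3$ subgroups, and your appeal to Dirichlet's theorem so that the limit over primes $p \equiv 1 \pmod 6$ is meaningful, fill in details the paper leaves implicit.
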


Similarly to the result above, it is possible to evaluate other classes of groups in terms of direct and semi-direct products by observing asymptotic patterns such as the one presented in the following proposition.

\begin{prop}
Let $G$ be a finite group with p-prime
\begin{itemize}
      \item if $G = C_{p} \rtimes D_{2p}$, then $$\mathcal{V}(G) =   \frac{2p+1}{3p+5};$$
      \item if $G = D_{2p} \times D_{2p}$, then $$  \mathcal{V}(G) =   \frac{p^2+3p+1}{3p^2+8p+9};$$
      \item if $ G= D_{2p^2} \times D_{2p^2}$, then 
      $$  \mathcal{V}(G) = \frac{p^4 + 2p^2 + p + 1}{4p^4 + 4p^3 + 12p^2 + 12p + 13}.$$
\end{itemize}
\end{prop}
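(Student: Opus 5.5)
Since every group listed is solvable, I will use the observation from the proof of the first proposition: the simple subgroups are exactly the subgroups of prime order. In each case the computation therefore splits into two parts. The first is counting the involutions, together with the subgroups of order $p$, to obtain $s(G)$. The second, harder part is computing $|L(G)|$. Throughout I take $p$ odd, so that the primes $2$ and $p$ behave independently. For the first item I read $C_p\rtimes D_{2p}$ as the group $C_p\times D_{2p}$, with $D_{2p}$ acting trivially on the extra factor $C_p$. This is the reading under which the stated counts come out right.

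\textbf{Numerators.}

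\emph{First item.} The involutions of $C_p\times D_{2p}$ are the elements $(1,r^ks)$, so there are $p$ of them. The Sylow $p$-subgroup is $C_p\times\langle r\rangle\cong C_p^2$, which has $p+1$ subgroups of order $p$ by the proposition on $C_p^n$. Hence $s(G)=2p+1$.

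\emph{Second item.} In $D_{2p}\times D_{2p}$ an element $(a,b)$ is an involution exactly when $a,b\in\{1\}\cup\{\text{reflections}\}$ and $(a,b)\neq(1,1)$. This gives $(p+1)^2-1=p^2+2p$ involutions. The Sylow $p$-subgroup is again $C_p^2$, contributing $p+1$ subgroups of order $p$. The total is $p^2+3p+1$.

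\emph{Third item.} For $D_{2p^2}\times D_{2p^2}$ the same argument gives $(p^2+1)^2-1=p^4+2p^2$ involutions, since $p^2$ is odd and the rotation subgroup has no involution. The Sylow $p$-subgroup is $C_{p^2}\times C_{p^2}$. Its elements of order dividing $p$ form $\Omega_1\cong C_p^2$, which gives $p+1$ subgroups of order $p$. The total is $p^4+2p^2+p+1$, as claimed.

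\textbf{Denominators.}

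\emph{First item.} Here I will simply list subgroups by order.
\begin{itemize}
\item Order $1$: the trivial subgroup.
\item Order $2$: $p$ subgroups.
\item Order $p$: $p+1$ subgroups.
\item Order $p^2$: only the Sylow subgroup.
\item Order $2p^2$: only $G$.
\end{itemize}
A subgroup of order $2p$ has the form $N\langle t\rangle$, where $N$ has order $p$ and $t$ is an involution normalizing $N$. A reflection $t$ acts on $C_p^2$ as $\mathrm{diag}(1,-1)$, so it fixes only the two eigenlines. Taking $N=C_p\times 1$ gives $p$ cyclic subgroups $C_{2p}$. Taking $N=1\times\langle r\rangle$ gives the single dihedral subgroup $1\times D_{2p}$. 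Altogether
$$1+p+(p+1)+(p+1)+1+1=3p+5.$$

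\emph{Second and third items.} For the direct products I will apply Goursat's lemma. Subgroups of $A\times A$ correspond to quintuples $(A_1,A_2,B_1,B_2,\theta)$, where $B_i\trianglelefteq A_i\le A$ and $\theta:A_1/B_1\to A_2/B_2$ is an isomorphism. Hence
$$|L(A\times A)|=\sum_{B_1\trianglelefteq A_1,\;B_2\trianglelefteq A_2,\;A_1/B_1\cong A_2/B_2}|\mathrm{Iso}(A_1/B_1,A_2/B_2)|.$$
For $A=D_{2p}$ the subgroup lattice is $\{1\}$, the $p$ subgroups $C_2$, $C_p$, and $D_{2p}$. The possible sections $A_1/B_1$ are therefore isomorphic to $1$, $C_2$, $C_p$, or $D_{2p}$.
\begin{itemize}
\item Trivial sections ($A_1=B_1$, $A_2=B_2$) give $(p+3)^2$ direct-product subgroups $A_1\times A_2$.
\item $C_2$-sections arise only from the pairs $(D_{2p},C_p)$ and $(C_2,1)$, so there are $p+1$ of them, each with a single isomorphism. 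They contribute $(p+1)^2$.
\item $C_p$-sections arise only from $(C_p,1)$, with $p-1$ isomorphisms each, contributing $p-1$.
\item $D_{2p}$-sections arise only from $(D_{2p},1)$, with $|\mathrm{Aut}(D_{2p})|=p(p-1)$ isomorphisms.
\end{itemize}
Summing,
$$(p+3)^2+(p+1)^2+(p-1)+p(p-1)=3p^2+8p+9,$$
which is exactly the stated denominator. The third item follows the same scheme with $A=D_{2p^2}$. The ingredients are:
\begin{itemize}
\item the lattice of $D_{2p^2}$, which has $\tau(p^2)+\sigma(p^2)=p^2+p+4$ subgroups by the earlier dihedral proposition;
\item its normal pairs $(A_1,B_1)$, sorted by the isomorphism type of the quotient: $1$, $C_2$, $C_p$, $C_{p^2}$, $D_{2p}$, $D_{2p^2}$;
\item the automorphism counts $|\mathrm{Aut}(C_p)|=p-1$, $|\mathrm{Aut}(C_{p^2})|=p(p-1)$, $|\mathrm{Aut}(D_{2p})|=p(p-1)$ and $|\mathrm{Aut}(D_{2p^2})|=p^3(p-1)$.
\end{itemize}

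\textbf{Main obstacle.} The bookkeeping in the third item is where I expect the difficulty. The subgroups of $D_{2p^2}$ of types $C_p$, $D_{2p}$ and $C_2$ each have several normal subgroups and quotients, and one must count, for each isomorphism type, the number of normal pairs with that quotient. The case requiring the most care is the $D_{2p}$ subgroups ($p$ of them), which have quotients $D_{2p}$, $C_2$ and $1$. I would organize the computation as a table of sections by type, square the counts with the appropriate automorphism weights, and verify that the resulting polynomial is $4p^4+4p^3+12p^2+12p+13$. As a sanity check I would confirm the total at $p=3$, where it should equal $826$.
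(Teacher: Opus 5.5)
Your proposal is correct, and for the denominators it takes a genuinely different route from the paper. The paper's entire proof is the sentence ``The proof is analogous to the previous cases''. In those earlier cases the numerator comes from counting prime-order subgroups, as you do. The value of $|L(G)|$ there is either a known closed formula (such as $\tau(n)+\sigma(n)$ for $D_{2n}$) or a GAP computation. Nothing in the paper says how to get $|L(G)|$ for these non-coprime products.

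Your use of Goursat's lemma fills that gap, in the form
\[
|L(A\times A)|=\sum_{Q} n_Q^2\,|\mathrm{Aut}(Q)|,
\]
where $n_Q$ counts the sections $B\trianglelefteq A'\le A$ with $A'/B\cong Q$. This yields the formulas uniformly for every odd prime. Your write-up also makes explicit two hypotheses that the statement leaves implicit, and both are genuinely needed:
\begin{itemize}
\item The prime must be odd. At $p=2$ the second group is $C_2^4$, which has $67$ subgroups, not $37$.
\item The first group must be the direct product $C_p\times D_{2p}$. The only nontrivial action of $D_{2p}$ on $C_p$ (reflections inverting) gives $\mathrm{Dih}(C_p^2)$. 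That group has $p^2$ involutions and $\mathcal{V}=\frac{p^2+p+1}{2p^2+2p+4}$, which contradicts both the stated value and the limit $\frac23$ in the next corollary.
\end{itemize}

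Two corrections are needed in the third item.

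\textbf{The deferred bookkeeping.} It does go through, but the sections from the $p$ subgroups $D_{2p}$ are not the only ones to track. You also need:
\begin{itemize}
\item $(C_{p^2},C_p)$, with quotient $C_p$;
\item $(D_{2p^2},C_p)$, with quotient $D_{2p}$;
\item $(D_{2p^2},C_{p^2})$, with quotient $C_2$.
\end{itemize}
The section counts are then $n_1=p^2+p+4$, $n_{C_2}=p^2+p+1$, $n_{C_p}=2$, $n_{C_{p^2}}=1$, $n_{D_{2p}}=p+1$ and $n_{D_{2p^2}}=1$. With your automorphism counts this gives
\[
(p^2+p+4)^2+(p^2+p+1)^2+4(p-1)+p(p-1)+(p+1)^2p(p-1)+p^3(p-1)=4p^4+4p^3+12p^2+12p+13,
\]
so the claim holds.

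\textbf{The sanity check.} Your check value is wrong. At $p=3$ the polynomial equals $589$, not $826$. The section count for $D_{18}\times D_{18}$ agrees: $256+169+8+6+96+54=589$. Had you checked against $826$, a correct computation would have looked wrong.
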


\begin{proof}
The proof is analogous to the previous cases.
\end{proof}

\begin{cor}
{\,}
    \begin{itemize}
        \item $\lim_{p \longrightarrow \infty } \mathcal{V}(C_{p} \rtimes D_{2p}) = \frac{2}{3};$
        \item $\lim_{p \longrightarrow \infty } \mathcal{V}(D_{2p} \times D_{2p}) = \frac{1}{3};$
        \item $\lim_{p \longrightarrow \infty } \mathcal{V}(D_{2p^2} \times D_{2p^2}) = \frac{1}{4}.$
    \end{itemize}
\end{cor}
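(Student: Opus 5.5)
The plan is to read off the leading coefficients of the three closed formulas in the preceding proposition, which we take as given. Each $\mathcal{V}(G)$ there is a quotient of two polynomials in $p$ with integer coefficients. In each case the numerator and the denominator have the same degree, so the limit as $p\to\infty$ is the ratio of the leading coefficients. To make this explicit, we divide numerator and denominator by the highest power of $p$ that appears, $p^{d}$, and use that $p^{-k}\to 0$ for every $k\ge 1$ as $p$ runs through the primes.

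Concretely, in the first case we divide by $p$:
\begin{equation*}
\mathcal{V}(C_p\rtimes D_{2p})=\frac{2+p^{-1}}{3+5p^{-1}}\longrightarrow \frac{2}{3}.
\end{equation*}
In the second case we divide by $p^2$:
\begin{equation*}
\mathcal{V}(D_{2p}\times D_{2p})=\frac{1+3p^{-1}+p^{-2}}{3+8p^{-1}+9p^{-2}}\longrightarrow\frac{1}{3}.
\end{equation*}
In the third case we divide by $p^4$:
\begin{equation*}
\mathcal{V}(D_{2p^2}\times D_{2p^2})=\frac{1+2p^{-2}+p^{-3}+p^{-4}}{4+4p^{-1}+12p^{-2}+12p^{-3}+13p^{-4}}\longrightarrow\frac14.
\end{equation*}
Each denominator has a nonzero constant term after the division, so passing to the limit is legitimate by the quotient rule for limits.

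There is no real obstacle, since everything rests on the formulas of the preceding proposition, whose proof was only sketched. As a sanity check on the leading terms, we can compare with the dihedral formula $\mathcal{V}(D_{2n})=\frac{n+\omega(n)}{\tau(n)+\sigma(n)}$ established earlier.

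For $D_{2p}\times D_{2p}$, the subgroups of order $2$ are generated by the involutions $(a,b)$ with $a,b\in\{1\}\cup\{\text{reflections}\}$ and $(a,b)\neq(1,1)$. This gives $(p+1)^2-1=p^2+2p$ of them. The subgroups of order $p$ lie in $C_p\times C_p$, which has $p+1$ of them. The total is $p^2+3p+1$, in agreement with the stated numerator. The leading $3p^2$ in the denominator should come mainly from the $p^2$ Klein four-subgroups $\langle (s,1),(1,t)\rangle$, the $p^2$ diagonal subgroups $\{(1,1),(s,t)\}$, and the $p^2$ subgroups generated by the elements $(s,t)$ of order $2$.

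An analogous count of involutions and Klein-type subgroups in $D_{2p^2}\times D_{2p^2}$ gives leading terms $p^4$ and $4p^4$. Confirming these leading coefficients is the only step needing care, and it is the one I would double-check. The limits themselves then follow immediately.
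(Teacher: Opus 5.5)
Your argument is the paper's: the corollary is just the ratio of leading coefficients in the three closed formulas of the preceding proposition, and dividing by the top power of $p$ makes this rigorous. (Note that the first formula, $\frac{2p+1}{3p+5}$, is in fact the value for the trivial action, i.e.\ $C_p\times D_{2p}$.)

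One correction to your side check on $D_{2p}\times D_{2p}$: $\{(1,1),(s,t)\}$ and $\langle (s,t)\rangle$ are the same $p^2$ subgroups, so you listed one family twice. The third $p^2$ in $3p^2+8p+9$ actually comes from the $p(p+1)$ dihedral subgroups $L\rtimes\langle (s,t)\rangle$ of order $2p$, with $L$ any of the $p+1$ subgroups of order $p$ in $C_p\times C_p$.
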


\section{About density and applications}

In \cite{TM}, some results concerning the density of the quotient function under study were presented. Later, in \cite{Lazorec}, additional density results were obtained and discussed. In both works, the fact that the functions considered are multiplicative when the orders of the groups are coprime played a fundamental role in obtaining the results. It then naturally arises the question of whether similar results exist in the case of quotient functions that are not multiplicative or when the orders of the groups are not coprime. In this direction, \cite{deandrade2025} presents a density result that advances this line of investigation, and, in a similar spirit, one may consider a general result that includes the function $\mathcal{V}$ for certain families of groups.

\begin{lem}\label{lem:subsum_interval_general}
Let $(x_n)_{n\ge 1}$ satisfy
\[
x_n>0,\quad x_n\to 0,\quad \sum_{n=1}^{\infty} x_n = +\infty.
\]
Then, for every $y>0$, there exists a (typically infinite) subset $I\subset\mathbb{N}$ such that
\[
\sum_{n\in I} x_n = y.
\]
In particular, the set of \emph{finite subsums}
\[
\left\{\sum_{n\in P} x_n :\ P\subset\mathbb{N},\ |P|<\infty\right\}
\]
is dense in $[0,\infty)$.
\end{lem}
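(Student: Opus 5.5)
The plan is to use a greedy construction, as in the classical proof of Riemann's rearrangement theorem. Fix $y>0$ and go through the indices $n=1,2,3,\dots$ in order. Put $n$ into $I$ exactly when the current partial sum plus $x_n$ stays at most $y$. More precisely, set $S_0=0$. For $n\ge 1$, if $S_{n-1}+x_n\le y$ we put $n\in I$ and set $S_n=S_{n-1}+x_n$; otherwise $n\notin I$ and $S_n=S_{n-1}$. By construction $S_n\le y$ for all $n$ and $(S_n)$ is nondecreasing. Hence $S=\lim S_n=\sum_{n\in I}x_n$ exists and satisfies $S\le y$.

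The key step, and the only real obstacle, is to show that $S=y$. We split into two cases.

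\emph{Case 1: only finitely many indices are rejected.} Then there is $N$ such that every $n\ge N$ is accepted. We get $S\ge \sum_{n\ge N}x_n=+\infty$, which contradicts $S\le y$. So this case cannot occur.

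\emph{Case 2: infinitely many indices $n_1<n_2<\cdots$ are rejected.} For each rejected index, $S_{n_j-1}+x_{n_j}>y$. This gives $y-x_{n_j}<S_{n_j-1}\le S\le y$. Since $x_n\to 0$, letting $j\to\infty$ yields $S=y$.

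The two hypotheses play separate roles. Divergence rules out Case 1, and $x_n\to 0$ closes the gap in Case 2. Positivity makes the partial sums monotone.

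For the density statement, take $y\ge 0$ and $\varepsilon>0$. If $y=0$, use $P=\{n\}$ with $x_n<\varepsilon$, which exists because $x_n\to 0$. If $y>0$, choose $I$ as above. Because $\sum_{n\in I}x_n=y$ converges, some finite truncation $P=I\cap\{1,\dots,N\}$ satisfies $y-\varepsilon<\sum_{n\in P}x_n\le y$. Every finite subsum lies in $[0,\infty)$, so the set of finite subsums is dense there.
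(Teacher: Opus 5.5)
Your greedy construction is correct and complete. Divergence rules out finitely many rejections, and $x_n\to 0$ forces $S=y$ once infinitely many indices are rejected; this also covers the case where $I$ ends up finite because some $S_n$ equals $y$ exactly.

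The paper gives no argument of its own and only cites Nitecki, where the result rests on essentially this same greedy, Riemann-rearrangement-style argument. Your proof is therefore a self-contained version of the intended one.
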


\begin{proof}
See \cite{NiteckiCantorvalsMonthly}.
\end{proof}

\begin{theorem} \label{Teorema 3.1}
Let $\mathcal{F}$ be a family of finite groups indexed by prime numbers $p$. 
Suppose there exists a sequence $(x_n)_{n \geq 1}$ associated with $\mathcal{V}$ that each term $x_n$ is of the form
\[
x_n = -\ln \left(\frac{p_n + q}{p_n + r} \right),
\]
where $p_n$ denotes the $n$-th prime number, $q<r \in \mathbb{N}$. 
Then the set
\[
\left\{ \prod_{n \in P} \mathcal{V}(\mathcal{F}_{p_n}) 
\;\middle|\;
P \subset \mathbb{N}\setminus\{0\},\ |P| < \infty
\right\}
\]
is dense in $(0,1]$.
\end{theorem}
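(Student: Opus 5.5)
The plan is to pass to logarithms and reduce the statement to Lemma \ref{lem:subsum_interval_general}. I read the hypothesis as saying that $\mathcal{V}(\mathcal{F}_{p_n}) = \frac{p_n+q}{p_n+r}$, so that $x_n = -\ln \mathcal{V}(\mathcal{F}_{p_n})$. Examples are $C_p^2$ with $(q,r)=(1,3)$ and $C_p\rtimes C_3$. For a finite $P$ we then have
\[
\prod_{n\in P}\mathcal{V}(\mathcal{F}_{p_n}) = \exp\Bigl(-\sum_{n\in P} x_n\Bigr).
\]
The map $t\mapsto e^{-t}$ is a continuous bijection from $[0,\infty)$ onto $(0,1]$, and continuous surjections send dense sets to dense sets. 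So it suffices to show that the finite subsums $\sum_{n\in P}x_n$ are dense in $[0,\infty)$. The empty product gives $1$, which corresponds to the subsum $0$.

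Next I verify the three hypotheses of Lemma \ref{lem:subsum_interval_general}.

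\textbf{Positivity.} Since $q<r$, we have $0<\frac{p_n+q}{p_n+r}<1$, so $x_n>0$. Here $q\ge 0$ because $q\in\mathbb{N}$.

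\textbf{Convergence to $0$.} Since $\frac{p_n+q}{p_n+r}\to 1$ as $p_n\to\infty$, we get $x_n\to 0$.

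\textbf{Divergence.} Write
\[
x_n = \ln\Bigl(1+\frac{r-q}{p_n+q}\Bigr).
\]
Using $\ln(1+u)\ge u/2$ for $0<u\le 1$ gives $x_n\ge \frac{r-q}{2(p_n+q)}$ for all large $n$. Since $p_n+q\le (1+q)p_n$, this is at least $\frac{r-q}{2(1+q)}\cdot\frac{1}{p_n}$. Euler's theorem that $\sum 1/p_n=\infty$ then gives $\sum x_n=+\infty$ by comparison.

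With these three facts, Lemma \ref{lem:subsum_interval_general} shows that the finite subsums are dense in $[0,\infty)$.

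To finish concretely, take $y\in(0,1]$ and $\varepsilon>0$. Choose a finite $P$ with $\bigl|\sum_{n\in P}x_n - (-\ln y)\bigr|<\delta$, where $\delta$ comes from the uniform continuity of $e^{-t}$ on $[0,\infty)$, which holds because $e^{-t}$ is $1$-Lipschitz there. Then $\bigl|\prod_{n\in P}\mathcal{V}(\mathcal{F}_{p_n})-y\bigr|<\varepsilon$.

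The main obstacle is the divergence step, since it is the only place where arithmetic input is needed, namely the divergence of the prime reciprocal series. The alternative is to compare via Mertens or $p_n\sim n\ln n$ and the divergence of $\sum 1/(n\ln n)$. A lesser point is interpretive: the hypothesis must be read as $x_n=-\ln\mathcal{V}(\mathcal{F}_{p_n})$ exactly, perhaps for all sufficiently large $n$. If it only holds eventually, I would discard the finitely many initial indices, since the lemma still applies to the tail.
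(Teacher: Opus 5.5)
Your proposal is correct and follows essentially the same route as the paper: set $x_n=-\ln\mathcal{V}(\mathcal{F}_{p_n})$, check $x_n>0$, $x_n\to 0$ and $\sum x_n=\infty$ by comparison with $\sum 1/p_n$, apply Lemma~\ref{lem:subsum_interval_general}, and carry density over through $t\mapsto e^{-t}$; the only difference is cosmetic, since you bound $x_n$ below via $\ln(1+u)\ge u/2$ and $p_n+q\le(1+q)p_n$, whereas the paper uses $\ln(1+u)\ge u/(1+u)$ to get $x_n\ge (r-q)/(p_n+r)$. (Side remark only, not affecting the proof: $C_p\rtimes C_3$ is not a valid illustration, because it has $\mathcal{V}=\frac{p+1}{p+3}$ only for $p\equiv 1 \pmod 6$, so it is not indexed by all primes $p_n$.)
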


\begin{proof}

Suppose that there exists a sequence $(x_n)_{n \geq 1}$ associated with $\mathcal{V}$ such that, for $G \in \mathcal{F}$, one has

\begin{equation}\label{seq}
 x_n = -\ln( \mathcal{V}(G)) = -\ln \left(\frac{p_n + q}{p_n + r} \right),   
\end{equation}

\noindent{then, according to Lemma 4.1 in \cite{Lazorec} and Lemma \ref{lem:subsum_interval_general}, if \ref{seq} is positive, satisfies $\lim_{n \longrightarrow \infty} x_n = 0$, and the series $\sum_{n=1}^{\infty} x_n$ is divergent, then the set containing the sums of all finite subsequences of $(x_n)$ is dense in $[0,\infty)$}. In fact, we have $x_n > 0$ and $\lim_{n \longrightarrow \infty} x_n = 0$. Finally, it remains to verify that the series is divergent. Consider

\begin{align*}
 \sum_{n=1}^{\infty} x_{n} =  \sum_{n=1}^{\infty} -\ln \left(\frac{p_n + q}{p_n + r} \right) = \sum_{n=1}^{\infty} \ln \left(\frac{p_n + r}{p_n + q} \right), 
\end{align*}

\noindent{since $q<r$, we have}
\[
\ln\frac{p_n+r}{p_n+q}
=\ln\!\left(1+\frac{r-q}{p_n+q}\right)
\ge \frac{r-q}{p_n+r}.
\]

Therefore,

$$
\sum_{n=1}^{\infty}\ln\frac{p_n+r}{p_n+q}
\ge (r-q)\sum_{n=1}^{\infty}\frac{1}{p_n+r}.
$$

Since $p_n + r \sim p_n$ and $\sum \frac{1}{p} = \infty$ (see \cite{Euler1737}, \cite{Niven1971}), the series diverges. Therefore, one has

   \[
   \overline{\left\{-\ln\prod_{n \in P} \left(\frac{p_n + q}{p_n + r} \right) \;\middle|\; P \subset \mathbb{N} \setminus \{0\}, |P| < \infty \right\}} = [0, \infty).
   \]

The exponential function \(e^{-x}\) is continuous, which preserves the density in the set. Then,

 \[
   \overline{\left\{\prod_{n \in P} \left(\frac{p_n + q}{p_n + r} \right) \;\middle|\; P \subset \mathbb{N} \setminus \{0\}, |P| < \infty \right\}} = (0, 1].
   \]

   Therefore, we have $\exp(-x_n)=\mathcal{V}(\mathcal{F}_{p_n})=\dfrac{p_n+q}{p_n+r}$ for each $n$, and hence
\[
\overline{\left\{\prod_{n\in P}\mathcal{V}(\mathcal{F}_{p_n})
\;\middle|\; P\subset\mathbb N\setminus\{0\},\ |P|<\infty\right\}}=(0,1],
\]
   
which proves the theorem.

\end{proof}

\begin{ex}
    Consider $\mathcal{F} = \{ C_{p}^{2}, D_{2p}\}$, for any $G \in \mathcal{F}$, we have $\mathcal{V}(G) = \left(\frac{p + 1}{p + 3} \right)$, where p is prime. Taking $q = 1$ and $r = 3$, the conditions of Theorem \ref{Teorema 3.1} are satisfied. Therefore, $\prod_{n \in P} \mathcal{V}(G)$ is dense in $(0,1]$.
\end{ex}

\begin{ex} 
Consider the family $\mathcal{F}=\{C_p^{2}, D_{2p}\}$, where $p$ is prime. As discussed above, for any
$G\in\mathcal{F}$ we have
\[
\mathcal{V}(G)=\frac{p+1}{p+3}.
\]
Given a target $t\in(0,1]$, we define $y:=-\ln(t)$ and consider
\[
x(p):=-\ln\!\left(\frac{p+1}{p+3}\right)
= \ln\!\left(\frac{p+3}{p+1}\right) > 0,
\qquad \lim_{p\to\infty} x(p)=0.
\]
By Theorem~\ref{Teorema 3.1}, finite products of the form
\[
\prod_{p\in P} \mathcal{V}(\mathcal{F}_p)
\]
are dense in $(0,1]$. Below we illustrate this mechanism by approximating three classical constants:
the Euler--Mascheroni constant $\gamma$ \cite{euler1740progressionibus},
the Meissel--Mertens constant $M$ \cite{mertens1874beitrag},
and the Landau--Ramanujan constant $K$ \cite{landau1908einteilung}.
In each case, we first choose a small set of primes (a greedy selection; cf.\ \cite{cormen2022introduction})
so that the partial product is close to the target, and then perform a \emph{fine adjustment} using a single large prime,
whose factor $\frac{p+1}{p+3}$ is very close to $1$.

\medskip
\noindent\textbf{(a) Euler–Mascheroni Constant.}
Let
\[
t=\gamma \approx 0.577215664901532860606512.
\]
In the greedy step we choose
\[
P=\{2,53,853\}:\quad
\frac{3}{5},\ \frac{54}{56}=\frac{27}{28},\ \frac{854}{856}=\frac{427}{428},
\]
hence
\[
\prod_{p\in P}\frac{p+1}{p+3}
=\frac{3}{5}\cdot\frac{27}{28}\cdot\frac{427}{428}
=\frac{4941}{8560}
\approx 0.577219626168224,
\]
with error
\[
\left|\frac{4941}{8560}-\gamma\right|
\approx 3.9612666914\cdot 10^{-6}.
\]
For the fine adjustment we take the prime \(p=291437\), obtaining
\[
\frac{p+1}{p+3}
=\frac{291438}{291440}
=\frac{145719}{145720}
\approx 0.999993135,
\]
and the refined product is
\[
\frac{4941}{8560}\cdot\frac{145719}{145720}
\approx 0.577215665012404,
\quad
\left|\,0.577215665012404-\gamma\,\right|
\approx 1.1087086804\cdot 10^{-10}.
\]

\medskip
\noindent\textbf{(b) Meissel--Mertens Constant.}
Let
\[
t=M \approx 0{.}261497212847642783755000.
\]
In the greedy step we choose
\[
P=\{2,3,5,13,521\}:\quad
\frac{3}{5},\ \frac{4}{6}=\frac{2}{3},\ \frac{6}{8}=\frac{3}{4},\ \frac{14}{16}=\frac{7}{8},\ 
\frac{522}{524}=\frac{261}{262}.
\]
hence
\[
\prod_{p\in P}\frac{p+1}{p+3}
=\frac{3}{5}\cdot\frac{2}{3}\cdot\frac{3}{4}\cdot\frac{7}{8}\cdot\frac{261}{262}
=\frac{5481}{20960}
\approx 0{.}261498091603053,
\]
with error
\[
\left|\frac{5481}{20960}-M\right|\approx 8{.}7875541066\cdot 10^{-7}.
\]
For the fine adjustment we take the prime  \(p=595157\), obtaining
\[
\frac{p+1}{p+3}=\frac{595158}{595160}=\frac{297579}{297580},
\]
and the final product is
\[
\frac{5481}{20960}\cdot\frac{297579}{297580}
\approx 0{.}261497212854174,
\quad
\left|\,0{.}261497212854174-M\,\right|
\approx 6{.}5309979647\cdot 10^{-12}.
\]

\medskip
\noindent\textbf{(c) Landau--Ramanujan Constant.}
Let
\[
t=K \approx 0{.}764223653589220662990699.
\]
In the greedy step we choose
\[
P=\{7,43,1543\}:\quad
\frac{8}{10}=\frac{4}{5},\ \frac{44}{46}=\frac{22}{23},\ \frac{1544}{1546}=\frac{772}{773}.
\]
Therefore,
\[
\prod_{p\in P}\frac{p+1}{p+3}
=\frac{4}{5}\cdot\frac{22}{23}\cdot\frac{772}{773}
=\frac{67936}{88895}
\approx 0{.}764227459362169,
\]
with error
\[
\left|\frac{67936}{88895}-K\right|\approx 3{.}8057729482\cdot 10^{-6}.
\]
For the fine adjustment we take the prime \(p=401627\), obtaining
\[
\frac{p+1}{p+3}=\frac{401628}{401630}=\frac{200814}{200815},
\]
and the refined product is
\[
\frac{67936}{88895}\cdot\frac{200814}{200815}
\approx 0{.}764223653732812,
\quad
\left|\,0{.}764223653732812-K\,\right|
\approx 1{.}4359102796\cdot 10^{-10}.
\]

\medskip
The fractions above are short enough to allow direct manual verification. They illustrate, in a concrete way,
the sum-product mechanism underlying Theorem~\ref{Teorema 3.1}: by choosing a finite set of primes,
we make \(\sum x(p)\) approximate \(y=-\ln(t)\) and, consequently,
\(\prod \mathcal{V}(\mathcal{F}_p)=\exp(-\sum x(p))\) approximate \(t\).

\end{ex}

\begin{cor}\label{cor:fine-tune}
Under the hypotheses of Theorem~\ref{Teorema 3.1}, fix $t \in (0,1]$ and set $y := -\ln(t)$.
Assume that $P_0$ is a finite set of indices such that
\[
S_0 := \sum_{n \in P_0} x_n \le y,
\qquad \Delta := y - S_0 > 0.
\]
Then, for every $\eta \in (0, \Delta)$, there exists a finite set $P \supset P_0$ such that
\[
\left| \sum_{n \in P} x_n - y \right| < \eta,
\quad \text{and consequently} \quad
\left| \prod_{n \in P} \mathcal{V}(\mathcal{F}_{p_n}) - t \right|
\le t \, (e^{\eta} - 1).
\]
\end{cor}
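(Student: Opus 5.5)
The plan is to apply the tail of the sequence $(x_n)$ to the residual gap $\Delta$, and then move from sums back to products with the exponential. By the proof of Theorem~\ref{Teorema 3.1} we know three things about $x_n = \ln\frac{p_n+r}{p_n+q}$: it is positive, it tends to $0$, and $\sum_n x_n = +\infty$. Let $N := 1+\max P_0$. The shifted sequence $(x_n)_{n\ge N}$ has the same three properties, since removing finitely many terms from a divergent positive series leaves it divergent.

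Apply Lemma~\ref{lem:subsum_interval_general} to $(x_n)_{n\ge N}$ with target $\Delta>0$. This gives finite sets $Q\subset\{N,N+1,\dots\}$ whose sums $\sum_{n\in Q}x_n$ come arbitrarily close to $\Delta$. If one prefers to avoid quoting the density statement, a direct greedy construction also works. Choose $M\ge N$ with $x_n<\eta$ for all $n\ge M$. Then run through $n=M,M+1,\dots$, adding $n$ to $Q$ whenever $\sum_{m\in Q}x_m + x_n \le \Delta$. Because the full tail diverges, the running total must eventually exceed $\Delta-\eta$. Indeed, suppose it stayed at most $\Delta-\eta$ forever. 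Then every later $x_n<\eta$ would fit under $\Delta$ and be added, so the running total would diverge, a contradiction. At the first time the total exceeds $\Delta-\eta$ we stop. The set $Q$ is then finite and satisfies $\Delta-\eta<\sum_{n\in Q}x_n\le\Delta$. Setting $P:=P_0\cup Q$, which is a disjoint union containing $P_0$, we get $\bigl|\sum_{n\in P}x_n-y\bigr|<\eta$. This greedy step is the only place needing care: one must use $x_n\to0$ to ensure the jumps are smaller than $\eta$, and divergence of the tail to ensure termination. I expect it to be the main (though mild) obstacle.

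For the product estimate, write $S:=\sum_{n\in P}x_n$. By the hypothesis of Theorem~\ref{Teorema 3.1}, $\mathcal{V}(\mathcal{F}_{p_n})=e^{-x_n}$, so $\prod_{n\in P}\mathcal{V}(\mathcal{F}_{p_n})=e^{-S}$ and $t=e^{-y}$. Hence
\[
\Bigl|e^{-S}-t\Bigr| = t\,\bigl|e^{\,y-S}-1\bigr|.
\]
Let $\delta:=y-S$, so $|\delta|<\eta$. For $\delta\ge0$ we have $e^{\delta}-1\le e^{\eta}-1$. For $\delta<0$ we have $1-e^{\delta}\le 1-e^{-\eta}\le e^{\eta}-1$. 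In both cases the bound $t(e^{\eta}-1)$ follows.

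Note that the greedy construction actually gives $0\le\delta<\eta$, so only the first case occurs. The two-sided bound is included in case one instead quotes Lemma~\ref{lem:subsum_interval_general} directly.
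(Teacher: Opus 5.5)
Your proof is correct and follows essentially the paper's route: pass to a tail where $x_n<\eta$, use the divergence of that tail to select a finite $Q$ disjoint from $P_0$, and then exponentiate to obtain the bound $t(e^{\eta}-1)$. The only difference is cosmetic. The paper takes an inclusion-minimal $Q$ with $S_0+\sum_{n\in Q}x_n>y$ and so overshoots ($0<S-y<\eta$), whereas your greedy block undershoots ($0\le y-S<\eta$); also take $N=1$ when $P_0=\emptyset$.
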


\begin{proof}
Fix $\eta \in (0, \Delta)$. Since
\[
x_n = \ln\!\left(1 + \frac{r - q}{p_n + q}\right) \to 0
\quad \text{as } n \to \infty,
\]
there exists $N$ such that $x_n < \eta$ for all $n \ge N$.
Consider the tail series $\sum_{n \ge N,\; n \notin P_0} x_n$, which diverges.
Hence, there exists a finite set
\[
Q \subset \{ n \ge N : n \notin P_0 \},
\]
such that
\[
S_0 + \sum_{n \in Q} x_n > y.
\]
Choose $Q$ minimal with respect to this property.
Define $P := P_0 \cup Q$ and $S := \sum_{n \in P} x_n$.
Let $m$ be the last index added, that is, the largest element of $Q$.
By minimality, we have $S - x_m \le y < S$, and therefore
\[
0 < S - y \le x_m < \eta,
\]
that is, $|S - y| < \eta$.
Finally, since $\prod_{n \in P} \mathcal{V}(\mathcal{F}_{p_n}) = \exp(-S)$ and $t = \exp(-y)$,
\[
\left|\exp(-S)-\exp(-y)\right|
= e^{-y}\left|e^{-(S-y)}-1\right|
\le e^{-y}\left(e^{|S-y|}-1\right)
\le t\,(e^{\eta}-1).
\]
\end{proof}


\medskip
Next, we record consequences and extensions of Theorem ~\ref{Teorema 3.1}.

\begin{cor}[Application to the logarithmic sequence of $\mathcal{V}$]
\label{cor:V_subsum}
Under the hypotheses of Theorem~\ref{Teorema 3.1}, define
\[
x_n:=-\ln\!\big(\mathcal{V}(\mathcal{F}_{p_n})\big)
=-\ln\!\left(\frac{p_n+q}{p_n+r}\right)
=\ln\!\left(\frac{p_n+r}{p_n+q}\right).
\]
Then, for every $y>0$, there exists $I\subset\mathbb N$ such that
\[
\sum_{n\in I} -\ln\!\big(\mathcal{V}(\mathcal{F}_{p_n})\big)=y.
\]
In particular, for every $t\in(0,1)$ there exists $I\subset\mathbb N$ such that the infinite product converges and
\[
\prod_{n\in I}\mathcal{V}(\mathcal{F}_{p_n})=t.
\]
\end{cor}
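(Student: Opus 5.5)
The plan is to reduce the statement to Lemma~\ref{lem:subsum_interval_general}, applied to the logarithmic sequence $x_n=\ln\!\big((p_n+r)/(p_n+q)\big)$, and then to exponentiate.

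\textbf{Step 1: the hypotheses of the lemma.} These were already established in the proof of Theorem~\ref{Teorema 3.1}. Since $q<r$, we have $x_n>0$. Since $p_n\to\infty$, we have $x_n\to 0$. Divergence of $\sum x_n$ follows from the bound $x_n\ge (r-q)/(p_n+r)$ together with $\sum 1/p_n=\infty$. I will simply cite these three facts.

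\textbf{Step 2: the subsum.} Applying Lemma~\ref{lem:subsum_interval_general} gives, for every $y>0$, a set $I\subset\mathbb N$ with $\sum_{n\in I}x_n=y$. This is the first assertion.

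If one wants the construction explicitly rather than by citation, use the greedy algorithm. Scan $n=1,2,\dots$ and put $n$ into $I$ whenever the current partial sum plus $x_n$ is still $\le y$.
\begin{itemize}
\item The partial sums $S_k$ increase and are bounded by $y$, so they converge to some $S\le y$.
\item Suppose $S<y$. Because $x_n\to 0$, there is $N$ with $x_n<y-S$ for all $n\ge N$. Then every $n\ge N$ is accepted.
\item The accepted tail then has a divergent sum, contradicting $S\le y$. Hence $S=y$.
\end{itemize}
If a finite partial sum hits $y$ exactly, $I$ is finite and the argument is already complete.

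\textbf{Step 3: the product.} Fix $t\in(0,1)$ and set $y=-\ln t>0$. Take the set $I$ from Step 2. The partial products over $I$ equal $\exp$ of minus the partial sums. Since $\exp$ is continuous, these partial products converge to $e^{-y}=t$.

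The infinite product converges in the genuine sense, that is, to a nonzero limit. This holds because the factors $\mathcal V(\mathcal F_{p_n})=(p_n+q)/(p_n+r)$ lie in $(0,1)$ and $\sum_{n\in I}x_n<\infty$. The order of the factors does not matter, because all $x_n$ are positive, so the convergence is absolute.

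\textbf{Main obstacle.} The only delicate point is the greedy argument in Step 2, specifically showing that the limit $S$ cannot stay strictly below $y$. This is exactly where both $x_n\to 0$ and divergence of the tail are needed. Since the lemma is assumed, the rest is bookkeeping.
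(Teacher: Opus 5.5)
Your proposal is correct and follows the paper's proof: cite the positivity, $x_n\to 0$, and divergence of $\sum x_n$ already verified in Theorem~\ref{Teorema 3.1}, apply Lemma~\ref{lem:subsum_interval_general} to get $\sum_{n\in I}x_n=y$, and exponentiate with $y=-\ln t$. Your greedy construction is a valid self-contained proof of the lemma, which the paper only cites. Your observation that the product converges to a nonzero limit independently of ordering makes precise what the paper dispatches in one line.
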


\begin{proof}
In Theorem~\ref{Teorema 3.1} it was already verified that $(x_n)$ satisfies \(x_n>0\), \(x_n\to0\), and \(\sum x_n=+\infty\).
Hence, by Lemma~\ref{lem:subsum_interval_general}, given $y>0$ there exists $I$ such that \(\sum_{n\in I}x_n=y\).

For $t\in(0,1)$, take $y=-\ln(t)>0$; then
\[
t=e^{-y}
=\exp\!\left(-\sum_{n\in I}x_n\right)
=\prod_{n\in I} e^{-x_n}
=\prod_{n\in I}\mathcal{V}(\mathcal{F}_{p_n}).
\]
The convergence of the product follows from the convergence of \(\sum_{n\in I}x_n=y\).
\end{proof}

\begin{theorem}[Density restricting to primes in an arithmetic progression]
\label{thm:AP_dense}
Fix $m\ge 2$ and $a$ with $\gcd(a,m)=1$. Let $(p_{n_k})_{k\ge1}$ be the subsequence of primes satisfying
\[
p_{n_k}\equiv a\pmod m.
\]
Then the set of finite products
\[
\left\{
\prod_{k\in P}\mathcal{V}(\mathcal{F}_{p_{n_k}})
\;\middle|\;
P\subset\mathbb N,\ |P|<\infty
\right\}
\]
is dense in $(0,1]$.
\end{theorem}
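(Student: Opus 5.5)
The plan is to repeat the argument of Theorem~\ref{Teorema 3.1}, replacing the sequence of all primes by the subsequence $(p_{n_k})_{k\ge1}$ of primes with $p_{n_k}\equiv a\pmod m$. As in that theorem, we work under the standing hypothesis that $\mathcal{V}(\mathcal{F}_{p})=\frac{p+q}{p+r}$ with $q<r$ in $\mathbb{N}$. Set
\[
y_k:=-\ln\!\big(\mathcal{V}(\mathcal{F}_{p_{n_k}})\big)=\ln\!\left(1+\frac{r-q}{p_{n_k}+q}\right).
\]
We verify the three hypotheses of Lemma~\ref{lem:subsum_interval_general} for $(y_k)$ and then transport the result through the exponential map.

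\emph{Positivity and decay.} Since $r>q$, we have $y_k>0$. The primes $p_{n_k}$ are distinct and therefore tend to infinity, provided there are infinitely many of them; this is guaranteed by Dirichlet's theorem, because $\gcd(a,m)=1$. Hence $y_k\to 0$.

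\emph{Divergence.} This is the main obstacle, since it is the only place where the restriction to the progression matters. Exactly as in the proof of Theorem~\ref{Teorema 3.1}, we have $y_k\ge \frac{r-q}{p_{n_k}+r}$, and for all large $k$ also $p_{n_k}+r\le 2p_{n_k}$. It therefore suffices to know that
\[
\sum_{p\equiv a \ (\mathrm{mod}\ m)}\frac1p=+\infty .
\]
This is the strong form of Dirichlet's theorem, which is precisely what Dirichlet's $L$-function argument proves: $\sum_{p\equiv a}p^{-s}\sim \frac{1}{\varphi(m)}\ln\frac{1}{s-1}$ as $s\to1^+$. Quantitatively, Mertens' theorem for progressions gives $\sum_{p\le x,\,p\equiv a}\frac1p=\frac{1}{\varphi(m)}\ln\ln x+O(1)$. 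We cite this result rather than reprove it. Euler's argument for all primes does not suffice, and the weak form of Dirichlet's theorem (infinitude alone) does not imply divergence. So this is the one genuinely new input.

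\emph{Conclusion.} By Lemma~\ref{lem:subsum_interval_general}, the finite subsums $\sum_{k\in P}y_k$ are dense in $[0,\infty)$. Now use that $x\mapsto e^{-x}$ is a continuous surjection from $[0,\infty)$ onto $(0,1]$; note that the empty product gives $1$. Since
\[
\prod_{k\in P}\mathcal{V}(\mathcal{F}_{p_{n_k}})=\exp\!\Big(-\sum_{k\in P}y_k\Big),
\]
these finite products are dense in $(0,1]$. If desired, Corollary~\ref{cor:fine-tune} applies verbatim along the subsequence and gives explicit error control.
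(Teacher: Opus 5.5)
Your proposal is correct and follows essentially the same route as the paper. Both arguments use the lower bound $x_k\ge \frac{r-q}{p_{n_k}+r}$, get divergence of $\sum_{p\equiv a\ (m)} 1/p$ from the Mertens-type theorem for progressions, apply Lemma~\ref{lem:subsum_interval_general}, and transport the result through $\exp(-\cdot)$; your explicit appeal to Dirichlet's theorem for infinitude and your bound $p_{n_k}+r\le 2p_{n_k}$ just make rigorous two steps the paper treats informally.
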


\begin{proof}
Define, for $k\ge1$,
\[
x_k:=-\ln\!\left(\mathcal{V}(\mathcal{F}_{p_{n_k}})\right)
=\ln\!\left(1+\frac{r-q}{p_{n_k}+q}\right)>0.
\]
We have \(x_k\to0\) as \(k\to\infty\). Using the inequality $\ln(1+u)\ge \frac{u}{1+u}$ for $u>0$, we obtain
\[
x_k
\ge \frac{r-q}{p_{n_k}+r}.
\]
Hence,
\[
\sum_{k=1}^{\infty}x_k
\ge
(r-q)\sum_{k=1}^{\infty}\frac{1}{p_{n_k}+r}.
\]

Since $p_{n_k}+r\approx p_{n_k}$, it suffices to know that $\sum_k 1/p_{n_k}$ diverges. For this, we use a Mertens-type result for arithmetic progressions (see \cite{Williams1974MertensAP}), which implies
\[
\sum_{\substack{p\le x\\ p\equiv a\ (m)}} \frac1p
=
\frac{1}{\varphi(m)}\log\log x + O(1),
\qquad x\to\infty,
\]
and therefore
\[
\sum_{p\equiv a\ (m)}\frac1p
\]
diverges.

Thus, $(x_k)$ is positive, tends to $0$, and has divergent sum. By Lemma~\ref{lem:subsum_interval_general},
the finite subsums of $(x_k)$ are dense in $[0,\infty)$; applying $\exp(-\cdot)$, we obtain the density of the finite products
\[
\prod_{k\in P}\mathcal{V}(\mathcal{F}_{p_{n_k}})
=
\exp\!\left(-\sum_{k\in P}x_k\right)
\]
in $(0,1]$.
\end{proof}

\begin{ex}[Example of the Theorem~\ref{thm:AP_dense} with the Catalan Constant]\label{ex:AP_dense_Catalan}
Consider the family $\mathcal{F}=\{C_p^2,D_{2p}\}$, for which
\[
\mathcal{V}(\mathcal{F}_p)=\frac{p+1}{p+3}\qquad (q=1,\ r=3).
\]
Fix the arithmetic progression of primes with
\[
m=4,\qquad a=1,\qquad \gcd(a,m)=1,
\]
That is, we only allow primes $p\equiv 1\pmod 4$.

Let $G$ be \emph{Catalan Constant}
\cite{Zudilin2003AperyLikeCatalan},
\[
G=\sum_{n=0}^{\infty}\frac{(-1)^n}{(2n+1)^2}\approx 0.9159655941772502.
\]
A constructive procedure (of the greedy type \cite{cormen2022introduction} in the logarithmic domain, restricted to the progression $p\equiv 1\pmod 4$)
produces the finite set of primes
\[
P=\{29,\,89,\,1597,\,241973\}\subset\{p\ \text{prime}:\ p\equiv 1\ (\mathrm{mod}\ 4)\}.
\]

With this set $P$, we obtain the finite product
\[
\prod_{p\in P}\mathcal{V}(\mathcal{F}_p)
=\prod_{p\in P}\frac{p+1}{p+3}
=\frac{30}{32}\cdot\frac{90}{92}\cdot\frac{1598}{1600}\cdot\frac{241974}{241976}
\approx 0.9159655949838855.
\]
Therefore,
\[
\left|\prod_{p\in P}\frac{p+1}{p+3}-G\right|
\approx 8.06635314098969\times 10^{-10}<10^{-9}.
\]

This example shows that the density guaranteed in Theorem~\ref{thm:AP_dense} remains valid even under the restriction
to a single residue class of primes (here, $p\equiv 1\pmod 4$), allowing one to approximate specific constants by finite products
composed solely of factors arising from primes in the arithmetic progression.
Moreover, the Mertens-type result in arithmetic progressions used in the proof may be viewed as the logarithmic counterpart
of the product version of Mertens' third theorem, now restricted to primes satisfying $p\equiv a\pmod m$.

Thus, $G$ is approximated (with error $<10^{-9}$) by a finite product formed \emph{only} from primes
$p\equiv 1\pmod 4$, in accordance with the conclusion of Theorem~\ref{thm:AP_dense}.
\end{ex}

\section{Relationships with other functions}

As already seen, if $G$ is solvable, the only simple groups that $G$ admits are cyclic subgroups of prime order. Therefore, it is possible to establish the following order relation, considering the function under study as a restriction on the degree of cyclicity in solvable groups.

\begin{equation} \label{relacao}
    \mathcal{V}(G) \leq cdeg(G) \leq \mathfrak{J}(G).
\end{equation}

This order relationship makes it possible, for example, to study families of groups in terms of their degree of cyclicity, which had not previously been investigated for this purpose.

\begin{prop}
    Consider the family $\{G_n\}_{n \ge 0},$ defined in \ref{Def. 1}. Let $G_0 = (C_p \times C_p) \rtimes C_p = C_{p}^{2}\rtimes C_p $, then $\lim_{p \longrightarrow \infty} cdeg(G_0) = 1.$
\end{prop}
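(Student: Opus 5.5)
We read $cdeg(G)$ as the cyclicity degree, i.e. the proportion $|C(G)|/|L(G)|$ of cyclic subgroups among all subgroups. The plan is to compute both $|C(G_0)|$ and $|L(G_0)|$ exactly for $G_0=C_p^2\rtimes C_p$, the Heisenberg group of order $p^3$ with $p$ odd, and then let $p\to\infty$. We count subgroups by order $1,p,p^2,p^3$.

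\textbf{Orders $1$, $p$ and $p^3$.} First we check that $G_0$ has exponent $p$. It has class $2$, and $p$ is odd, so the power identity used in the proof for $\{G_n\}$ gives
\[
(xy)^p = x^p y^p [y,x]^{\binom{p}{2}} .
\]
Since $p\mid\binom{p}{2}$ and commutators have order $p$, this reduces to $(xy)^p=x^py^p$. As $G_0$ is generated by elements of order $p$, it follows that $g^p=1$ for all $g$. Hence every nonidentity element has order $p$. The number of subgroups of order $p$ is then
\[
\frac{p^3-1}{p-1}=p^2+p+1,
\]
exactly as in the count for $\Omega_1(G_n)$ (there $|\Omega_1|=p^3$). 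All of these are cyclic. The trivial subgroup is cyclic. The whole group $G_0$ is not cyclic.

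\textbf{Order $p^2$.} Subgroups of order $p^2$ are maximal in a $p$-group, hence normal, and they contain the Frattini subgroup $\Phi(G_0)=Z(G_0)\cong C_p$. They therefore correspond to the subgroups of order $p$ in
\[
G_0/\Phi(G_0)\cong C_p^2,
\]
of which there are $p+1$. Because the exponent is $p$, each of them is $\cong C_p\times C_p$ and so is not cyclic.

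\textbf{Assembling the counts.} This gives
\[
|L(G_0)| = 1+(p^2+p+1)+(p+1)+1 = p^2+2p+4,
\qquad
|C(G_0)| = p^2+p+2 ,
\]
so that
\[
cdeg(G_0)=\frac{p^2+p+2}{p^2+2p+4}\longrightarrow 1 \quad (p\to\infty).
\]
As a consistency check with \eqref{relacao}, $\mathcal{V}(G_0)=\dfrac{p^2+p+1}{p^2+2p+4}\le cdeg(G_0)$.

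The only step needing care is the exponent-$p$ claim, since it is what forces all order-$p^2$ subgroups to be noncyclic and makes the order-$p$ count exact. We handle it by the class-$2$ power formula above, which requires $p$ odd. For $p=2$ the group would be $D_8$, but the limit is insensitive to this case.
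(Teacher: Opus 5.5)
Your proof is correct, but it takes a different route from the paper. The paper never counts cyclic subgroups. It takes $\mathcal{V}(G_0)=\frac{p^2+p+1}{p^2+2p+4}$ from GAP, notes $\mathfrak{J}(G_0)=1$ because $G_0$ is a $p$-group, and squeezes $cdeg(G_0)$ between the two using \eqref{relacao}. You instead compute $cdeg(G_0)=\frac{p^2+p+2}{p^2+2p+4}$ exactly, by proving exponent $p$ with the class-$2$ power formula ($p$ odd) and counting maximal subgroups through $G_0/\Phi(G_0)\cong C_p^2$.

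The paper's route is shorter and illustrates its point that $\mathcal{V}$ bounds the cyclicity degree from below. However, its key input is a closed formula in $p$ that a computer can only confirm for finitely many primes. Your count actually proves that formula, since your $\mathcal{V}(G_0)$ coincides with it. It also gives the exact value rather than only the limit, and needs neither \eqref{relacao} nor $\mathfrak{J}$.

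The direct count is also more robust. For the group $C_{p^2}\rtimes C_p$ that Definition~\ref{Def. 1} calls $G_0$, one has $\mathcal{V}=\frac{p+1}{2p+4}\to\frac{1}{2}$, so the squeeze gives nothing. The same kind of count there still gives $cdeg=\frac{2p+2}{2p+4}\to 1$.
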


\begin{proof}
Since $G$ is a finite $p$-group, it follows that $\mathfrak{J}(G_0) = 1$. Using \cite{GAP4}, we have

 $$\mathcal{V}(G_0) = \frac{p^2+p+1}{p^2+2p+4},$$

 \noindent{considering the relation \ref{relacao}, it follows}

\begin{align*}
   \lim_{p \longrightarrow \infty}  \mathcal{V}(G_0) & \leq \lim_{p \longrightarrow \infty} cdeg(G_0) \leq \lim_{p \longrightarrow \infty} \mathfrak{J}(G_0).\\
\end{align*}

\noindent{Therefore, $\lim_{p \longrightarrow \infty} cdeg(G_0) = 1.$}
\end{proof}

The order relation~\ref{relacao} is well defined in view of the functions involved. A nontrivial issue, however, is the identification of families of finite groups for which equality holds. In~\cite{deandrade2025}, a family of groups is exhibited for which the equality $cdeg(G)=\mathfrak{J}(G)$ is satisfied. This naturally leads to the question of whether other functions — whose denominators involve the group order rather than the number of subgroups, such as those considered in~\cite{Garonzi_2018} and~\cite{Lazorec} — also respect this order relation. In this context, the most natural approach is to determine the families of groups for which $|G|=|L(G)|$. The following proposition provides a partial answer to this question.  

\begin{prop} \label{prop. 4.2}
Let $G = D_{2^{a+1}p}$. If $p = 2^{a+1} + 2a + 1$ is prime, then
\begin{equation*}
    |L(G)| = |G| = 2^{a+1}p.
\end{equation*}
\end{prop}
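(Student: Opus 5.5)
The plan is to use the classical count of subgroups of a dihedral group: $|L(D_{2n})| = \tau(n) + \sigma(n)$. This is the denominator already used implicitly in the proposition giving $\mathcal{V}(D_{2n})$. I will first recall why it holds. Every subgroup of $D_{2n}=\langle r,s\rangle$ is either contained in $\langle r\rangle$ or has the form $\langle r^{d}, r^{k}s\rangle$ with $d\mid n$ and $0\le k<d$.
\begin{itemize}
\item The subgroups of the first kind are the $\tau(n)$ cyclic subgroups $\langle r^{n/e}\rangle$, one for each $e\mid n$.
\item For a fixed $d$ there are exactly $d$ subgroups of the second kind. Summing over $d\mid n$ gives $\sigma(n)$.
\end{itemize}
To justify the second count, take $H\not\le\langle r\rangle$. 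Then $H\cap\langle r\rangle=\langle r^d\rangle$ for some $d\mid n$, and $H$ is the union of this subgroup with one coset of reflections $r^{k}s\langle r^d\rangle$. The coset is determined by $k \bmod d$.

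Next I specialize to $n=2^{a}p$, so that $|G|=2n=2^{a+1}p$. Here $p$ is an odd prime: $p=2^{a+1}+2a+1$ is odd, and it is distinct from $2$ since $p\ge 3$. Multiplicativity of $\tau$ and $\sigma$ gives
\[
\tau(n)=2(a+1),\qquad \sigma(n)=(2^{a+1}-1)(p+1).
\]
So the identity $|L(G)|=|G|$ becomes
\[
2(a+1)+(2^{a+1}-1)(p+1)=2^{a+1}p .
\]
Expanding, the left side is $2a+2+2^{a+1}p+2^{a+1}-p-1$. After cancelling $2^{a+1}p$ from both sides, the equation reduces to
\[
p=2^{a+1}+2a+1,
\]
which is exactly the hypothesis. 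The argument reverses, so the equality is in fact equivalent to this condition on $p$.

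The only step needing care is the subgroup count $\tau(n)+\sigma(n)$. In particular one must check that the subgroups containing reflections are counted without repetition, i.e.\ that the pair $(d,\,k \bmod d)$ determines $H$ uniquely. I would cite the standard reference (or \cite{Marius}) rather than reprove it. The primality of $p$ is used only to evaluate $\tau$ and $\sigma$ multiplicatively via $\gcd(2^a,p)=1$ and $\tau(p)=2$, $\sigma(p)=p+1$. The remaining algebra is routine.
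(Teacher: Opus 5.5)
Your proposal is correct and matches the paper's proof. Both arguments take $|L(D_{2n})|=\tau(n)+\sigma(n)$ with $n=2^ap$, evaluate $\tau(n)=2(a+1)$ and $\sigma(n)=(2^{a+1}-1)(p+1)$ multiplicatively, and rearrange $\tau(n)+\sigma(n)=2^{a+1}p$ into $p=2^{a+1}+2a+1$; your sketch of the dihedral subgroup count, which the paper uses without justification, is also sound.
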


\begin{proof}

We want to check when 

\begin{equation}\label{eq. 4}
\tau(m) + \sigma(m) = 2m     
\end{equation}

Suppose $m=2^a p$ with $a \geq 1$ and $p$ an odd prime. Then, by multiplicativity,

$$
\tau(m)= 2(a+1),\quad
\sigma(m)=\sigma(2^a)\sigma(p)=(2^{a+1}-1)(1+p).
$$

The equation $\tau(m)+\sigma(m)=2m=2^{a+1}p$ becomes

$$
2(a+1)+(2^{a+1}-1)(1+p)=2^{a+1}p.
$$

Rearranging gives us

$$
p=2^{a+1}+2a+1.
$$

Therefore, $m=2^ap$ is a solution if, and only if, $p=2^{a+1}+2a+1$ is prime.
\end{proof}

\begin{rem}
It is worth noting that this solution is not unique to dihedral groups. There is another order configuration that also seems to satisfy the equality \ref{eq. 4}. For $D_{2n}$, with $2n = 2^{j}pq,$ where $j = \{2,4,8,...\}$ and $p$ and $q$ are distinct primes. Apparently there are no representatives for $j=32,128$.    
\end{rem}
    
For the dihedral groups of Proposition \ref{prop. 4.2}, the following relation holds

\begin{equation}\label{relação ordem 8} 
    \mathcal{V}(G) < cdeg(G) = \alpha(G) <\mathfrak{J}(G) < \beta(G) =1,
\end{equation}

\noindent{where, the function $\alpha$ is defined in \cite{Garonzi_2018}}.

\section{Further research}

Given what has already been established about solvable groups, it becomes natural to study groups that are not solvable. From these groups, it is possible to investigate the asymptotic behavior of other families that are neither simple nor solvable, such as those described in Proposition \ref{prop. 2.4}. At this stage, the use of GAP \cite{GAP4} proved to be essential, despite presenting considerable limitations in the case of simple groups of very large order. In Table \ref{Tab. 1}, one can observe some families of simple groups for which $\mathcal{V}$ has been computed. Patterns are observed in two families: $PSL(2,p)$ and $A_n$, for prime $p$ and $n \geq 5$, respectively. For $A_n$, it is possible to formulate the following proposition, since the observed decreasing pattern:

\begin{prop} \label{prop. 5.1}
    Let $G = A_n$, for $n\geq5$, then
    \begin{equation*}
        \lim_{n \longrightarrow \infty}\mathcal{V}(G) =0.
    \end{equation*}
\end{prop}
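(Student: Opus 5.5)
We need $s(A_n)/|L(A_n)|\to 0$. The plan is to bound the numerator $s(A_n)$ from above by something manageable, and to bound $|L(A_n)|$ from below by something that grows much faster.

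\textbf{Numerator.} Every simple subgroup is generated by its elements, so the cheapest bound would count simple subgroups through their elements. A non-abelian simple subgroup is generated by two elements, so $s(A_n)\le |A_n|^2$. This is too crude, however, since $|L(A_n)|$ is not obviously larger than $|A_n|^2$.

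A cleaner split treats the two kinds of simple subgroups separately. Simple subgroups of prime order are at most $|A_n|$ in number. For the non-abelian simple subgroups $S$, I would use the fact that each of them lies in $L(A_n)$ and also has many proper subgroups. Each such $S$ contains a subgroup $C_2\times C_2$, since it has even order and non-cyclic Sylow $2$-subgroups (Burnside, or Feit–Thompson together with Brauer–Suzuki). That alone does not separate the subgroups, though.

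So I would instead compare directly with abelian subgroups. Take a maximal elementary abelian $2$-subgroup $E\cong C_2^{k}$ of $A_n$, with $k\approx n/2$; for example, products of disjoint double transpositions give rank about $\lfloor n/2\rfloor-1$. By the proposition on $C_p^n$, $|L(E)|=\sum_j\binom{k}{j}_2\ge 2^{\lfloor k^2/4\rfloor}$, which is roughly $2^{n^2/16}$.

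\textbf{Denominator versus numerator.} The lower bound above is superpolynomial in $n$, but the count of simple subgroups could still be comparable in size, so I need a better upper bound for $s(A_n)$. The known bound on the number of subgroups of $S_n$ is $|L(S_n)|\le 24^{n^2/6}$ (Pyber), and the number of non-abelian simple subgroups can be bounded more tightly. Every subgroup $S$ is $d(S)$-generated, and non-abelian simple groups are $2$-generated. Hence
\[
s(A_n)\le |A_n|+|A_n|^2\le (n!)^2=e^{O(n\log n)}.
\]
On the other side, $|L(A_n)|\ge |L(E)|\ge 2^{cn^2}$ with $c>0$, for instance $c\approx 1/16$ for large $n$. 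Therefore
\[
\mathcal V(A_n)\le \frac{e^{2n\log n}}{2^{cn^2}}\to 0.
\]

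\textbf{Key steps in order.}
\begin{enumerate}
\item Show that $s(A_n)\le |A_n|^2$. For prime-order subgroups this holds because each is generated by one element. For non-abelian simple subgroups, invoke the theorem that every finite simple group is $2$-generated, which relies on the classification.
\item Exhibit an elementary abelian $2$-subgroup of rank $k\ge \lfloor n/4\rfloor$ inside $A_n$. One choice uses the disjoint double transpositions $(1\,2)(3\,4),(5\,6)(7\,8),\dots$. A better choice takes $C_2^{2}$ Klein four-groups on disjoint blocks of size four, giving rank $2\lfloor n/4\rfloor$.
\item Apply the Gaussian binomial lower bound $\binom{k}{\lfloor k/2\rfloor}_2\ge 2^{\lfloor k/2\rfloor\lceil k/2\rceil}$, as in the proof concerning $G_n$, together with $L(E)\subseteq L(A_n)$.
\item Compare the two bounds using Stirling's formula.
\end{enumerate}

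\textbf{Main obstacle.} The main obstacle is step~1, because $2$-generation of non-abelian simple groups depends on the classification. If I want to avoid that, I would first try a weaker generation bound, for example that every subgroup of $S_n$ is generated by at most $n/2$ elements (McIver–Neumann). That gives $s(A_n)\le (n!)^{n/2}=e^{O(n^2\log n)}$, which would not beat $2^{cn^2}$. So I would accept the classification-based $2$-generation result as a cited fact.
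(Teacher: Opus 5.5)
Your argument is correct, and it takes a genuinely different route from the paper's.

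The paper never works inside $A_n$. It asserts $0<\mathcal{V}(A_n)\le\mathcal{V}(S_n)$, and then gets $\mathcal{V}(S_n)\to 0$ by dividing a cited numerator bound $4.89n+1141.33$ by the Roney-Dougal--Tracey growth rate $|L(S_n)|=2^{(1/16+o(1))n^2}$.

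You instead bound both quantities for $A_n$ directly. For the numerator, $s(A_n)\le |A_n|+|A_n|^2\le (n!)^2$, using $2$-generation of non-abelian finite simple groups. For the denominator, $|L(A_n)|\ge |L(E)|\ge 2^{\lfloor n/4\rfloor^2}\ge 2^{(n-3)^2/16}$, where $E\cong C_2^{2\lfloor n/4\rfloor}$ is built from Klein four-groups on disjoint $4$-blocks and you use $\binom{k}{j}_2\ge 2^{j(k-j)}$.

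This buys you several things:
\begin{itemize}
\item an explicit estimate $\mathcal{V}(A_n)\le (n!)^2\,2^{-(n-3)^2/16}$;
\item a proof whose only deep input is the classification-dependent $2$-generation theorem;
\item a by-hand derivation of the same $n^2/16$ exponent that the paper imports.
\end{itemize}

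It also avoids both weak links in the paper's argument. First, the comparison $\mathcal{V}(A_n)\le\mathcal{V}(S_n)$ does not follow from $s(A_n)\le s(S_n)$ and $|L(A_n)|\le |L(S_n)|$. It in fact fails already for $n=5$, where it would require $32/59\le 42/156$. Second, no bound on $s(S_n)$ that is linear in $n$ can hold, since the transpositions alone give $\binom{n}{2}$ subgroups of order $2$.

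On your side, only the write-up needs cleaning. Drop the detours that play no role in the final argument, namely Klein four-groups inside simple subgroups and Pyber's upper bound. Also state one choice of $E$ consistently in step 2; any rank linear in $n$ suffices.
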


To prove this proposition, we will need the following lemma.

\begin{lem}\label{lemma 5.2}
For $n \geq 5$, one has
\[
\lim_{n \to \infty} \mathcal{V}(S_n)=0.
\]
\end{lem}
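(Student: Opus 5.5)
We need to show that $s(S_n)/|L(S_n)|\to 0$. The plan is to bound the numerator above by the number of subgroups of prime order (plus the nonabelian simple subgroups), and to bound the denominator below by a quantity that grows much faster.

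\textbf{Step 1: the numerator.} Every simple subgroup is either cyclic of prime order or nonabelian simple. Each cyclic subgroup of prime order is generated by an element of prime order, so the number of such subgroups is at most the number of elements of prime order, hence at most $n!$. For nonabelian simple subgroups, we bound crudely: every subgroup of $S_n$ is generated by at most $n-1$ elements (Jerrum/McIver–Neumann; $d(H)\le n/2$ for $n\ge 4$). Cruder still, every simple subgroup is $2$-generated by the classification, but we would rather avoid that.

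The more robust route is to make the denominator dominate a quantity tied to the numerator. So we aim for a statement of the form $s(S_n)\le |L(S_n)|\cdot o(1)$ by exhibiting, for each simple subgroup, many non-simple subgroups.

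\textbf{Step 2: an injection with large fibers.} Take any simple subgroup $T\le S_n$. Its support has size $k\le n$.
\begin{itemize}
\item If $k\le n-2$, then $T\times C$ is a non-simple subgroup for every nontrivial subgroup $C$ of $\mathrm{Sym}$ of the complement of the support. This already gives several non-simple subgroups per $T$, but not uniformly many.
\item A better device is to map each simple $T$ to subgroups $H$ with $T<H$ or $H<T$. For $T=\langle g\rangle$ of prime order $p$, the subgroups $\langle g\rangle\times\langle h\rangle$ with $h$ commuting and disjointly supported are non-simple.
\end{itemize}

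Cleaner is a counting comparison. The number of elements of prime order in $S_n$ is at most $n!$, while $|L(S_n)|\ge 2^{\lfloor n/2\rfloor^2/4}$ via elementary abelian $2$-subgroups: $\langle (12),(34),\dots\rangle\cong C_2^{\lfloor n/2\rfloor}$, and by the Gaussian-binomial estimate used earlier, $|L(C_2^m)|\ge 2^{m^2/4}$. This lower bound is superexponential of order $2^{n^2/16}$, which beats $n!\approx 2^{n\log_2 n}$.

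\textbf{Step 3: nonabelian simple subgroups.} These must be bounded by something like $2^{O(n\log n)}$ or at least $2^{o(n^2)}$. Each is $2$-generated, giving at most $(n!)^2$ of them; without the classification, we instead use that every subgroup of $S_n$ is generated by at most $n/2$ elements, which yields only $(n!)^{n/2}$ — too weak. So we would invoke the fact that finite simple groups are $2$-generated (a consequence of the classification), giving $s(S_n)\le n!+(n!)^2\le 2(n!)^2=2^{O(n\log n)}$.

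\textbf{Step 4: conclusion.} Combining, $\mathcal{V}(S_n)\le 2(n!)^2/2^{\lfloor n/2\rfloor^2/4}\to 0$.

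The main obstacle is Step 3, controlling the nonabelian simple subgroups. If one wishes to avoid the classification, we would try Pyber's bound on the number of subgroups, or a direct Sylow/transitive-constituent argument, but $2$-generation is the first thing to try.

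This lemma will then feed into Proposition \ref{prop. 5.1}. Since $A_n$ contains the elementary abelian $2$-subgroup $\langle(12)(34),(56)(78),\dots\rangle$ of rank about $n/4$, the same scheme works there directly.
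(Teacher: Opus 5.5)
Your argument is correct, but it takes a different route from the paper. The paper's proof is a one-line citation: it takes $|L(S_n)|=2^{(1/16+o(1))n^2}$ from Roney-Dougal--Tracey for the denominator, and a linear bound $4.89n+1141.33$, attributed to Borovik et al., for the numerator. You prove only the half of the subgroup count that is actually needed, $|L(S_n)|\ge |L(C_2^{\lfloor n/2\rfloor})|\ge 2^{\lfloor n/2\rfloor^2/4}$, using $\langle (12),(34),\dots\rangle$ and $\binom{m}{k}_2\ge 2^{k(m-k)}$. You then bound the numerator by $s(S_n)\le n!+(n!)^2$, using $2$-generation of finite simple groups.

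What your version buys is a numerator bound that is true. No linear bound can dominate $s(S_n)$, because the involutions alone give superexponentially many subgroups of order $2$. Already for $n=10$ there are $9495$ of them, against $4.89\cdot 10+1141.33\approx 1190$; the paper's own table has $s(A_{10})=116109\le s(S_{10})$. The lemma only needs $s(S_n)=2^{o(n^2)}$ against a denominator of size $2^{cn^2}$, and your proof makes exactly this comparison. The price is dependence on the classification through $2$-generation, which you rightly flag: generic bounds on the number of generators of subgroups of $S_n$ only yield something like $(n!)^{n/2}$.

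Your closing remark about $A_n$ is also the better way to get Proposition~\ref{prop. 5.1}. The paper deduces it from $\mathcal{V}(A_n)\le\mathcal{V}(S_n)$, which is not justified (numerator and denominator both decrease) and in fact fails at $n=5$, where $\mathcal{V}(A_5)=32/59$ exceeds $\mathcal{V}(S_5)=42/156$.
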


\begin{proof}
It suffices to take the limit
\[
\lim_{n \to \infty} \frac{4.89n + 1141.33}{2^{\frac{n^2}{16+o(n^2)}}}=0,
\]
where the denominator is given in \cite{RoneyDougal2025SubgroupsOS} and the numerator is given in \cite{Borovik1996Maximal}.
\end{proof}

We may now prove Proposition \ref{prop. 5.1}.

\begin{proof}
Note that
\[
0< \mathcal{V}(A_n)\leq \mathcal{V}(S_n).
\]

Taking limits on both sides and applying Lemma \ref{lemma 5.2}, the result follows.
\end{proof}

Another interesting question is derived from Proposition \ref{prop. 4.2}, arising from asking whether $2^{a+1} + 2a + 1$ is prime infinitely often. A direct implication would be the existence of infinitely many dihedral groups for which the order relation \ref{relação ordem 8} holds.

\newpage

\bibliographystyle{plain}  
\bibliography{bibliography}  

@article{Garonzi_2018,
   title={On the Number of Cyclic Subgroups of a Finite Group},
   volume={49},
   ISSN={1678-7714},
   url={http://dx.doi.org/10.1007/s00574-018-0068-x},
   DOI={10.1007/s00574-018-0068-x},
   number={3},
   journal={Bulletin of the Brazilian Mathematical Society, New Series},
   publisher={Springer Science and Business Media LLC},
   author={Garonzi, Martino and Lima, Igor},
   year={2018},
   month=jan, pages={515–530} }

@article{Lazorec,
author = {Lazorec, Mihai-Silviu},
title = {A connection between the number of subgroups and the order of a finite group},
journal = {Journal of Algebra and Its Applications},
volume = {21},
number = {01},
pages = {2250001},
year = {2022},
doi = {10.1142/S0219498822500013},

URL = { 
    
        https://doi.org/10.1142/S0219498822500013
    
    

},
eprint = { 
    
        https://doi.org/10.1142/S0219498822500013
    
    

}

}

@manual{GAP4,
  author       = {The GAP Group},
  title        = {GAP -- Groups, Algorithms, and Programming},
  year         = {2022},
  version      = {4.12.2},
  note         = {\url{https://www.gap-system.org}}
}

@article{Marius,
author = {Tarnauceanu, Marius and Tóth, László},
year = {2015},
month = {04},
pages = {489-504},
title = {Cyclicity degrees of finite groups},
volume = {145},
journal = {Acta Mathematica Hungarica},
doi = {10.1007/s10474-015-0480-2}
}

@article{TM,
author = {Tarnauceanu, Marius},
year = {2017},
month = {01},
pages = {115-126},
title = {Normality degrees of finite groups},
volume = {33}
}

@misc{deandrade2025,
      title={Proportion of Nilpotent Subgroups in Finite Groups and Their Properties}, 
      author={João Victor M. de Andrade and Leonardo Santos da Cruz},
      year={2025},
      eprint={2501.11724},
      archivePrefix={arXiv},
      primaryClass={math.GR},
      url={https://arxiv.org/abs/2501.11724}, 
}

@article{It1951, title={On the Degrees of Irreducible Representations of a Finite Group}, volume={3}, DOI={10.1017/S0027763000012162}, journal={Nagoya Mathematical Journal}, author={Itô, Noboru}, year={1951}, pages={5–6}}

@article{Gustafson01111973,
author = {W. H. Gustafson},
title = {What is the Probability that Two Group Elements Commute?},
journal = {The American Mathematical Monthly},
volume = {80},
number = {9},
pages = {1031--1034},
year = {1973},
publisher = {Taylor \& Francis},
doi = {10.1080/00029890.1973.11993437},


URL = { 
    
        https://doi.org/10.1080/00029890.1973.11993437
    
    

},
eprint = { 
    
        https://doi.org/10.1080/00029890.1973.11993437
    
    

}

}

@article{Betz16032022,
author = {Alexander Betz and David A. Nash},
title = {Classifying Groups With a Small Number of Subgroups},
journal = {The American Mathematical Monthly},
volume = {129},
number = {3},
pages = {255--267},
year = {2022},
publisher = {Taylor \& Francis},
doi = {10.1080/00029890.2022.2010493},


URL = { 
    
        https://doi.org/10.1080/00029890.2022.2010493
    
    

},
eprint = { 
    
        https://doi.org/10.1080/00029890.2022.2010493
    
    

}

}

@article{Euler1737,
  author    = {Leonhard Euler},
  title     = {Variae observationes circa series infinitas},
  journal   = {Commentarii Academiae Scientiarum Petropolitanae},
  volume    = {9},
  pages     = {160--188},
  year      = {1744},
  note      = {Escrito em 1737},
  url       = {https://eulerarchive.maa.org/pages/E072.html}
}

@article{Niven1971,
  author    = {Ivan Niven},
  title     = {A Proof of the Divergence $\Sigma\, 1/p$},
  journal   = {The American Mathematical Monthly},
  volume    = {78},
  number    = {3},
  pages     = {272--273},
  year      = {1971},
  doi       = {10.1080/00029890.1971.11992740},
  note      = {Prova curta da divergência da soma dos recíprocos dos primos}
}

@article{He2022SomeIO,
  title={Some identities of Gaussian binomial coefficients},
  author={T. X. He and Anthony G. Shannon and Peter J.-S. Shiue},
  journal={Annales Mathematicae et Informaticae},
  year={2022},
  url={https://api.semanticscholar.org/CorpusID:245689346}
}

@article{euler1740progressionibus,
  author  = {Euler, L.},
  title   = {De progressionibus harmonicis observationes},
  journal = {Commentarii academiae scientiarum Petropolitanae},
  volume  = {7},
  year    = {1740},
  pages   = {150--156},
  note    = {(1734--35)}
}

@article{mertens1874beitrag,
  author  = {Mertens, F.},
  title   = {Ein Beitrag zur analytischen Zahlentheorie},
  journal = {Journal f{\"u}r die reine und angewandte Mathematik},
  volume  = {78},
  year    = {1874},
  pages   = {46--62}
}

@article{landau1908einteilung,
  author  = {Landau, E.},
  title   = {\"{U}ber die Einteilung der positiven ganzen Zahlen in vier Klassen nach der Mindestzahl der zu ihrer additiven Zusammensetzung erforderlichen Quadrate},
  journal = {Archiv der Mathematik und Physik},
  volume  = {13},
  year    = {1908},
  pages   = {305--312}
}

@book{cormen2022introduction,
  author    = {Cormen, Thomas H. and Leiserson, Charles E. and Rivest, Ronald L. and Stein, Clifford},
  title     = {Introduction to Algorithms},
  edition   = {4},
  publisher = {MIT Press},
  year      = {2022}
}

@article{NiteckiCantorvalsMonthly,
  author  = {Nitecki, Zbigniew},
  title   = {Cantorvals and Subsum Sets of Null Sequences},
  journal = {The American Mathematical Monthly},
  volume  = {122},
  number  = {9},
  year    = {2015},
  pages   = {862--870},
  doi     = {10.4169/amer.math.monthly.122.9.862}
}

@article{Williams1974MertensAP,
  author  = {Kenneth S. Williams},
  title   = {Mertens' Theorem for Arithmetic Progressions},
  journal = {Journal of Number Theory},
  volume  = {6},
  year    = {1974},
  pages   = {353--359},
  url     = {https://people.math.carleton.ca/~williams/papers/pdf/057.pdf}
}

@article{Zudilin2003AperyLikeCatalan,
  author  = {Zudilin, W.},
  title   = {An Ap\'ery-like Difference Equation for Catalan's Constant},
  journal = {The Electronic Journal of Combinatorics},
  volume  = {10},
  number  = {1},
  pages   = {R14},
  year    = {2003},
  doi     = {10.37236/1707},
  url     = {https://www.combinatorics.org/ojs/index.php/eljc/article/view/v10i1r14}
}

@inproceedings{RoneyDougal2025SubgroupsOS,
  title={Subgroups of symmetric groups: enumeration and asymptotic properties},
  author={Colva M. Roney‐Dougal and Gareth Tracey},
  year={2025},
  url={https://api.semanticscholar.org/CorpusID:276885489}
}

@article{Borovik1996Maximal,
  author    = {Borovik, Alexandre V. and et al.},
  title     = {Maximal Subgroups in Finite and Profinite Groups},
  journal   = {Transactions of the American Mathematical Society},
  volume    = {348},
  number    = {9},
  pages     = {3745--3761},
  year      = {1996},
  publisher = {American Mathematical Society},
  url       = {http://www.jstor.org/stable/2155252},
  note      = {Accessed: 2026-05-07}
}

\newpage
\section{Appendix I}\label{Apendice 2} 

\begin{table}[ht]
\centering
\caption{Simple evaluated groups}
\begin{tabular}{cccccc}
  \hline
 $G$ & $|G|$ & $s(G)$ & $|L(G)|$ & $V(G)$ \\ 
  \hline
 $A_5$ & 60 & 32 & 59 & 0.54237 \\ 
 $A_6$ & 360 & 134 & 501 & 0.26747 \\ 
 $A_7$ & 2520 & 627 & 3786 & 0.16561 \\ 
 $A_8$ & 20160 & 2848 & 48337 & 0.05892 \\ 
 $A_9$ & 181440 & 12443 & 508402 & 0.02447 \\ 
 $A_{10}$ & 1814400 & 116109 & 6469142 & 0.01795 \\ 
 $A_{11}$ & 19958400 & 1666154 & 81711572 & 0.02039 \\ 
 $A_{12}$ & 239500800 & 19272177 & 2019160542 & 0.00954 \\ 
 $A_{13}$ & 3113510400 & 188697242 & 31945830446 & 0.00591 \\ 
 PSL(3,2) & 168 & 58 & 179 & 0.32402 \\ 
 PSL(2,11) & 660 & 211 & 620 & 0.34032 \\ 
 PSL(2,13) & 1092 & 275 & 942 & 0.29193 \\ 
 PSL(2,17) & 2448 & 308 & 2420 & 0.12727 \\ 
 PSL(2,19) & 3420 & 667 & 2912 & 0.22905 \\ 
 PSL(2,23) & 6072 & 807 & 5915 & 0.13643 \\ 
 PSL(2,29) & 12180 & 2119 & 10040 & 0.21106 \\ 
 PSL(2,31) & 14880 & 1986 & 15413 & 0.12885 \\ 
 PSL(2,37) & 25308 & 2111 & 17731 & 0.11906 \\ 
 PSL(2,41) & 34440 & 4553 & 36129 & 0.12602 \\ 
 PSL(2,43) & 39732 & 3743 & 25462 & 0.14700 \\ 
 PSU(3,3) & 6048 & 752 & 5150 & 0.14602 \\ 
 PSU(3,4) & 62400 & 5540 & 31373 & 0.17658 \\ 
 PSU(3,5) & 126000 & 18257 & 179308 & 0.10182 \\ 
 PSU(4,3) & 3265920 & 916160 & 10009764 & 0.09153 \\ 
 PSU(3,8) & 5515776 & 273032 & 6864058 & 0.03978 \\ 
 Sz(8) & 29120 & 4552 & 17295 & 0.26320 \\ 
 Sz(32) & 32537600 & 1080352 & 21170191 & 0.05103 \\ 
 $M_{11}$ & 7920 & 1147 & 8651 & 0.13259 \\ 
 $M_{12}$ & 95040 & 11204 & 214871 & 0.05214 \\ 
 $M_{22}$ & 443520 & 111554 & 941627 & 0.11847 \\ 
 $M_{23}$ & 10200960 & 1273280 & 17318406 & 0.07352 \\ 
 $M_{24}$ & 244823040 & 16219100 & 1363957253 & 0.01189 \\ 
 $J_1$ & 175560 & 19287 & 158485 & 0.12170 \\ 
 $J_2$ & 604800 & 46800 & 1104344 & 0.04238 \\ 
 $J_3$ & 50232960 & 4941750 & 71564248 & 0.06905 \\ 
 HS & 44352000 & 4640440 & 149985646 & 0.03094 \\ 
 McL & 898128000 & 118406000 & 1719739392 & 0.06885 \\ 
 He & 4030387200 & 121547000 & 22303017686 & 0.00545 \\ 
   \hline
\end{tabular}
\label{Tab. 1}
\end{table}

\end{document}